\numberwithin{equation}{section}
\newtheorem{thm}[equation]{Theorem}
\newtheorem{prop}[equation]{Proposition}
\newtheorem{cor}[equation]{Corollary}
\newtheorem{lem}[equation]{Lemma}
\theoremstyle{definition}
\newtheorem{qu}[equation]{Question}
\renewcommand{\dim}{\operatorname{\mathsf{dim}}}
\renewcommand{\deg}{\operatorname{\mathsf{deg}}}
\newcommand\ind{\operatorname{\mathsf{ind}}}
\renewcommand\exp{\operatorname{\mathsf{exp}}}
\newcommand\corr{\operatorname{\mathsf{cor}}}
\newcommand{\wt}{\widetilde}
\newcommand{\la}{\langle}
\newcommand{\ra}{\rangle}
\newcommand{\mg}[1]{{#1}^{\times}}
\newcommand{\sq}[1]{{#1}^{\times 2}}
\newcommand{\n}{\mathsf{N}}
\newcommand{\ca}[2]{{}^{#1}\!\!{#2}}
\newcommand{\car}{\mathsf{char}}
\newcommand{\zz}{\mathbb{Z}}
\newcommand{\nat}{\mathbb{N}}
\renewcommand{\leq}{\leqslant}
\renewcommand{\geq}{\geqslant}
\renewcommand{\max}{\mathsf{max}}
\begin{document}
	\title{Splitting quaternion algebras defined over a finite field extension}

\date{04.10.2019}
	
\author{Karim Johannes Becher}
\author{Fatma Kader B\.{i}ng\"{o}l}
\author{David B.~Leep}

\address{Universiteit Antwerpen, Departement Wiskunde, Middelheim\-laan~1, 2020 Ant\-werpen, Belgium.}

\email{KarimJohannes.Becher@uantwerpen.be}
\email{FatmaKader.Bingol@uantwerpen.be}
\address{Department of Mathematics, University of Kentucky, Lexington, KY 40506-0027, USA.}
\email{leep@uky.edu}
\thanks{This work was supported by the FWO Odysseus programme (project \emph{Explicit Methods in Quadratic Form Theory}), funded by the Fonds Wetenschappelijk Onderzoek -- Vlaanderen.}
	\maketitle
	
\begin{abstract}
We study systems of quadratic forms over fields and their isotropy over $2$-extensions.
We apply this to obtain particular splitting fields for quaternion algebras defined over a finite field extension.
As a consequence we obtain that every central simple algebra of degree $16$ is split by a $2$-extension of degree at most $2^{16}$.

\medskip\noindent
{\sc Classification} (MSC 2010): 12E15, 16K20 
 
\medskip\noindent
{\sc{Keywords:}} central simple algebra, quaternion algebra, splitting field, index, exponent, $2$-extension, system of quadratic forms, corestriction, characteristic two
	\end{abstract}

\section{introduction}

Any central division algebra is split by a finite Galois extension. In particular, any central simple algebra is
 Brauer equivalent to a crossed product.
Examples of central division algebras which are not crossed products have been constructed in all degrees which are not of the form $d$ or $2d$ for a positive square-free integer $d$ (see \cite{Ami72}).
On the other hand, no such example is known where the exponent is square-free.

In this article we focus on central simple algebras of exponent $2$ and exploit their relation to quadratic forms.
By Merkurjev's Theorem, any such algebra is Brauer equivalent to a tensor product of quaternion algebras over its center.
Generic arguments show that there is a universal bound for the number of quaternion algebras needed to obtain such a product, hence~a bound in terms of the index of the algebra which is independent of its center (see~{\cite{Tig84}}).

The problem of obtaining such a bound is related to the problem of finding a bound on the degree of a Galois extension that splits a given Brauer equivalence class.
As a first step towards a solution to both problems we aim to bound the degree of a $2$-extension of the center which is a splitting field of a central simple algebra of exponent $2$ and of a given index.
%
%


Our approach is mainly based on the study of systems of quadratic forms.
In Section 2 we show that a system of $r$ quadratic forms in more than $\frac{r(r+1)}2$ variables over a field has a non-trivial zero over some $2$-extension of degree at most $2^r$ (\Cref{T:sys-bound}).
In Section 3 we consider quaternion algebras defined over a finite extension of a field $F$ and study splitting fields which are obtained as field composita with $2$-extensions of $F$. 
For a finite field extension $K/F$ with $[K:F]\leq8$ we show that any quaternion algebra over $K$ has a splitting field of the form $KF'$ for some $2$-extension $F'/F$ with $[F':F]\leq2^{[K:F]}$ (\Cref{C:quat-over-deg8ext}).
In Section~4 we obtain an obstruction for such a quaternion algebra over $K$ to be split by a $2$-extension coming from $F$ which is of small degree. 
This obstruction is given in terms of the corestriction algebra.
Our argument is a variation of an argument from \cite{RT83}.
In Section 5, we apply the result from Section 3 to show that any central division algebra of degree $16$ over $F$ is split by a $2$-extension of $F$ of degree at most $2^{16}$ (\Cref{T:2split16}).

\section{System of quadratic forms}

Let $F$ be a field.
Let $r$ be a positive integer and let $V$ be a finite-dimensional vector space over $F$. We consider a system of $r$ quadratic forms $\varphi=(\varphi_1,\ldots,\varphi_r)$ on $V$. The system $\varphi$ is said to be \emph{isotropic} if there exists a nonzero vector $v\in V$ such that $\varphi_i(v)=0$ for $1\leq i\leq r$, and \emph{anisotropic} otherwise. 

For a field extension $K/F$, $\varphi$ extends naturally to a system of quadratic forms over $K$ defined on the $K$-vector space $K\otimes_FV$, which is denoted by  $\varphi_K$.


A separable finite field extension $K/F$ is called a $2$-\emph{extension} if the degree of its normal closure is a power of $2$. Since $2$-groups are solvable, it follows by Galois theory that the finite extension $K/F$ is a $2$-extension if and only if $K$ can be obtained from $F$ by a sequence of separable quadratic extensions. 

We call a finite field extension $K/F$ a \emph{generalized $2$-extension} if there exist $r\in\nat$ and intermediate fields $K_0,\dots,K_r$ of $K/F$ with $K_0=F$, $K_r=K$ and such that $K_i/K_{i-1}$ is a quadratic field extension for $1\leq i\leq r$.
In characteristic different from $2$, all quadratic field extensions are separable, and consequently generalized $2$-extensions are in fact $2$-extensions in the usual sense.

\begin{thm}\label{T:sys-bound}
	Let $r\geq 1$ and let $\varphi$ 
	be a system of $r$ quadratic forms defined on an $F$-vector space of dimension at least $\frac{r(r+1)}{2}+1$. There exists a generalized $2$-extension $F'/F$ with $[F':F]\leq2^r$ such that $\varphi_{F'}$ is isotropic. 
\end{thm}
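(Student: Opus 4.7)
The plan is to induct on $r$. The base case $r=1$ is the elementary observation that a single quadratic form on a vector space of dimension at least $2$ becomes isotropic over a generalized quadratic extension: restrict the form to a $2$-dimensional subspace, and if the resulting binary form is not already isotropic, adjoin a root of the associated degree-$2$ polynomial (separable, purely inseparable, or Artin--Schreier as the characteristic dictates).

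For the inductive step, assume the result for $r$ forms and consider $\varphi = (\varphi_1,\ldots,\varphi_{r+1})$ on $V$ with $\dim V \geq \frac{(r+1)(r+2)}{2}+1$. Applying the base case to $\varphi_{r+1}$ yields, over a generalized quadratic extension $F_1/F$, a nonzero vector $v \in V_{F_1}$ with $\varphi_{r+1}(v)=0$. Writing $B_i$ for the polar bilinear form of $\varphi_i$, set
\[
V' := \bigcap_{i=1}^{r+1} \{w \in V_{F_1} \mid B_i(v,w) = 0\},
\]
which has codimension at most $r+1$ in $V_{F_1}$, so $\dim V' \geq \frac{r(r+1)}{2}+1$. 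For $w \in V'$ one has the key identity $\varphi_i(w+tv) = \varphi_i(w) + t^2\varphi_i(v)$. If every $\varphi_i(v)$ vanishes then $v$ itself is a common zero and we are done; otherwise, after relabeling we may take $\varphi_1(v) \neq 0$, and we define on $V'$ the system of $r$ quadratic forms
\[
\psi_i(w) := \varphi_1(v)\varphi_{i+1}(w) - \varphi_{i+1}(v)\varphi_1(w) \quad (i = 1,\ldots,r-1), \qquad \psi_r(w) := \varphi_{r+1}(w).
\]
A common zero $w$ of the $\psi_i$ satisfies $\varphi_{r+1}(w)=0$ and $\varphi_i(w)/\varphi_i(v) = \varphi_1(w)/\varphi_1(v)$ for all $i$ with $\varphi_i(v) \neq 0$, so that the vector $w+tv$ with $t^2 = -\varphi_1(w)/\varphi_1(v)$ is a common zero of all $r+1$ forms on $V$. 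The induction hypothesis applied to $(\psi_1,\ldots,\psi_r)$ on $V'$ produces such a $w$ over a generalized $2$-extension $F_2/F_1$ with $[F_2:F_1] \leq 2^r$.

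The hard part will be ensuring the scalar $t$ already lies in $F_2$, so that the final combination does not demand a further quadratic extension. A naive implementation adjoins $\sqrt{-\varphi_1(w)/\varphi_1(v)}$ separately and only yields the weaker total degree $2^{r+2}$ instead of the required $2^{r+1}$. Closing this gap---the crux of the proof---likely requires a sharper reduction, for instance one that forces $\varphi_1(w)=0$ (so that $t=0$ suffices) by building the condition into the reduced system without exceeding $r$ forms on $V'$, or one that absorbs the final square root into the initial quadratic extension $F_1$ through a more careful choice of $v$. A case analysis according to which of the $\varphi_i(v)$ vanish, together with the use of inseparable quadratic extensions where necessary in characteristic $2$, should complete the argument.
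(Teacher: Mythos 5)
Your inductive framework (isotropic vector $v$, polar-orthogonal complement, recombining $v$ and $w$) is the right one, but as you yourself note, the degree bookkeeping comes out to $2\cdot 2^{r}\cdot 2=2^{r+2}$ rather than $2^{r+1}$, and you do not close that gap; so the proof is incomplete. The problem is structural: you spend one quadratic extension at the start to make the \emph{last} form vanish at $v$, then run the induction on a system of $r$ forms, and then need a further square root for the scalar $t$. Neither of your proposed repairs works as stated: forcing $\varphi_1(w)=0$ would require carrying $\varphi_1$ into the reduced system, giving $r+1$ forms on $V'$, and there is no evident way to choose $v$ in the first step so that $-\varphi_1(w)/\varphi_1(v)$ is a square in $F_2$, since $w$ is only produced later.

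The correct fix is to invert your ordering so that \emph{no} extension is spent on $v$ at all. Pick $v$ arbitrary; if not all $\varphi_i(v)$ vanish, say $\varphi_r(v)=a_r\neq 0$, and replace $\varphi_i$ by $\wt{\varphi}_i=a_r\varphi_i-a_i\varphi_r$ for $i<r$ (with $a_i=\varphi_i(v)$), keeping $\varphi_r$. This substitution is invertible, so the new system has the same isotropic vectors as the old one over every extension, and now $v$ is a common zero of the first $r-1$ forms for free. Apply the induction hypothesis only to $(\varphi_1,\dots,\varphi_{r-1})$ restricted to a complement $V'$ of $Fv$ inside $\bigcap_{i<r}\{x: B_i(v,x)=0\}$; this costs degree at most $2^{r-1}$ and yields $w$ with $\varphi_i(w)=0$ for $i<r$. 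Since $v$ and $w$ are both isotropic for these $r-1$ forms and mutually polar-orthogonal, the forms $\varphi_1,\dots,\varphi_{r-1}$ vanish identically on the plane spanned by $v$ and $w$, so the single remaining quadratic extension is spent making the binary form $\varphi_r|_{\la v,w\ra}$ isotropic; any isotropic vector $u$ of it is automatically a common zero of all $r$ forms, with no scalar $t$ to extract. The total degree is $2^{r-1}\cdot 2=2^r$, as required.
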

\begin{proof}
Every anisotropic quadratic form of dimension at least two is isotropic over some quadratic field extension. Hence the statement is true for $r=1$.
We prove the statement by induction on $r$.

For the induction step we assume that $r\geq 2$.
Let $\varphi=(\varphi_1,\ldots,\varphi_r)$ and let $V$ denote the $F$-vector space on which the system $\varphi$ is defined.
We fix a nonzero vector $v\in V$. If $\varphi_{i}(v)=0$ for $1\leq i\leq r$, then $\varphi$ is isotropic over $F$ and we are done. 
Otherwise we may assume that $\varphi_r(v)\neq0$. Set $a_i=\varphi_i(v)$ for $1\leq i\leq r$. Consider the system of $r$ quadratic forms $\wt{\varphi}=(\wt{\varphi}_1,\ldots,\wt{\varphi}_{r-1},\varphi_r)$ on $V$ where $\wt{\varphi}_i=a_r\varphi_i-a_i\varphi_r$ for $1\leq i< r$. Note that $\wt{\varphi}_i(v)=0$ for $1\leq i< r$. Since over any field extension $F'/F$ the systems $\varphi_{F'}$ and $\wt{\varphi}_{F'}$ have the same isotropic vectors, the claim is not affected when we replace $\varphi$ by $\wt{\varphi}$. Hence we may assume that $\varphi_i(v)=0$ for $1\leq i< r$. 
	
For $1\leq i<r$, we set 
$$W_i=\{x\in V\mid \varphi_i(x+v)=\varphi_i(x)\}\,;$$ 
as $\varphi_i(v)=0$, $W_i$ is 
 the orthogonal space of $Fv$ with respect to the symmetric bilinear form associated to $\varphi_i$, and in particular $W_i$ is an $F$-subspace of $V$ of codimension at most $1$. 
We set $W=\bigcap_{i=1}^{r-1}W_i$.
By the hypothesis we have $\dim_FV\geq\frac{r(r+1)}{2}+1$ 
 and therefore
  $$\dim_F W\geq \dim_F V-(r-1)\geq\hbox{$\frac{(r-1)r}{2}$}+2\,.$$
Note that $v\in W$. We fix a direct complement $V'$ of $Fv$ in $W$.
The restriction of $\varphi_{1},\ldots,\varphi_{r-1}$ to $V'$ defines a system of $(r-1)$-quadratic forms on $V'$, and we have that $\dim_FV'=\dim_FW-1\geq \hbox{$\frac{(r-1)r}{2}$}+1$.
Hence, the induction hypothesis yields that there exist a generalized $2$-extension $F''/F$ with $[F'':F]\leq2^{r-1}$ and a nonzero vector $w\in V'\otimes_F F''$ such that $(\varphi_{i})_{F''}(w)=0$ for $1\leq i<r$. 
We consider the restriction of the quadratic form $(\varphi_{r})_{F''}$ to  $F''v\oplus F''w$. 
By the observation at the beginning of the proof, there exists a field extension $F'/F''$ with $[F':F'']\leq 2$ and a nonzero vector $u\in F'v\oplus F'w$ such that $(\varphi_{r})_{F'}(u)=0$. 
The choices of $v$ and $w$ imply that the forms $(\varphi_{1})_{F'},\ldots,( \varphi_{r-1})_{F'}$ vanish on $F'v\oplus F'w$.
In particular, we obtain that $$(\varphi_{1})_{F'}(u)=\ldots=(\varphi_{r-1})_{F'}(u)=(\varphi_{r})_{F'}(u)=0\,.$$ 
Furthermore, $F'/F$ is a generalized $2$-extension with $[F':F]\leq2^{r-1}\cdot2=2^r$.
\end{proof}

We call a field \emph{quadratically closed} if it has no quadratic field extension.
As a consequence of \Cref{T:sys-bound} we retrieve a result from \cite[Theorem 2.6]{Leep84}:

\begin{cor}\label{C:system-bound}
	If $F$ is quadratically closed then any system of $r$ quadratic forms in strictly more than $\frac{r(r+1)}{2}$ variables is isotropic.
\end{cor}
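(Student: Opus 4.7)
The plan is to derive this immediately from \Cref{T:sys-bound} by observing that a quadratically closed field admits no nontrivial generalized $2$-extension.

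First I would apply \Cref{T:sys-bound} to the given system $\varphi$ of $r$ quadratic forms in more than $\frac{r(r+1)}{2}$ variables. This yields a generalized $2$-extension $F'/F$ with $[F':F]\leq 2^r$ such that $\varphi_{F'}$ is isotropic.

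Next I would argue that $F'=F$. By definition a generalized $2$-extension is obtained by a finite tower $F=K_0\subset K_1\subset\cdots\subset K_r=F'$ of quadratic extensions. Since $F$ is quadratically closed, there is no quadratic extension of $F$, so $K_1=F$, and inductively $K_i=F$ for every $i$, hence $F'=F$.

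It follows that $\varphi$ itself is already isotropic over $F$, which is exactly the conclusion. The only substantive point is the observation about generalized $2$-extensions; this is essentially immediate from the definition, so no real obstacle arises.
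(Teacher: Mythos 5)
Your proposal is correct and follows exactly the paper's own argument: apply \Cref{T:sys-bound} and then note that quadratic closedness forces the tower defining the generalized $2$-extension to collapse, so $F'=F$. No issues.
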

\begin{proof}
	Let $\varphi$ be a system of $r$ quadratic forms on an $F$-vector space $V$ with $\dim_FV\geq \frac{r(r+1)}{2}+1$. Then, by \Cref{T:sys-bound}, there exists a generalized $2$-extension $F'/F$ with $[F':F]\leq2^r$  such that $\varphi_{F'}$ is isotropic. However, since $F$ is quadratically closed, we get that $F'=F$. Therefore $\varphi$ is isotropic.
\end{proof}

In \cite[Chapter~9, 2.3~(1)]{Pfi95} examples are given which show that, for $r\leq 3$ and $F$ the quadratic closure of $\mathbb{Q}$, the lower bounds on the number of variables in terms of $r$ are best possible to obtain the conclusions in \Cref{T:sys-bound} and \Cref{C:system-bound}. 






We turn briefly to a statement which will be useful to cover the case of characteristic two in \Cref{P:form-split-2ext-basefield} below.
In \cite[Lemma 7.6]{Alb39} a statement similar to the following one is proven, but without giving an explicit bound on the degree of the inseparable extension.

\begin{lem}\label{L:Albert}
Let $p$ be a prime number and  assume that $\car F=p$. Let $K/F$ be an algebraic field extension and let $\alpha\in K^{\times}$. 
Let $r\in\nat$ be minimal such that $\alpha^{p^r}$ is separable over $F$. There exists a purely inseparable extension $L/F$ with $L^{p^{r+1}}\subseteq F$ and $[L:F]\leq p^{[F(\alpha^{p^r}):F]	\cdot(r+1)}\leq p^{[K:F]}$ such that $\alpha\in {L(\alpha)}^{p}$.
\end{lem}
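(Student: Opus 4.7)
The plan is to set $\beta = \alpha^{p^r}$, which by hypothesis is separable over $F$, and write $f(X) = X^m + c_{m-1}X^{m-1}+\cdots+c_0\in F[X]$ for its minimal polynomial, where $m = [F(\beta):F]$. I would then build $L$ by adjoining to $F$ the (unique, in characteristic $p$) $p^{r+1}$-th roots $d_i$ of each coefficient $c_i$, taking $L = F(d_0, \ldots, d_{m-1})$. By construction $L/F$ is purely inseparable with $L^{p^{r+1}}\subseteq F$, and $[L:F]\leq (p^{r+1})^m = p^{m(r+1)}$.

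The motivation for this choice is that, setting $g(Y) = Y^m + d_{m-1}Y^{m-1}+\cdots+d_0 \in L[Y]$, the freshman's dream in characteristic $p$ gives $g(Y)^{p^{r+1}} = f(Y^{p^{r+1}})$. Evaluating at $Y = \alpha^{1/p}$ then yields $g(\alpha^{1/p})^{p^{r+1}} = f(\alpha^{p^r}) = f(\beta) = 0$, whence $g(\alpha^{1/p}) = 0$ and $[L(\alpha^{1/p}):L]\leq m$.

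The key step is then to show $\alpha^{1/p}\in L(\alpha)$, which is exactly the required condition $\alpha\in L(\alpha)^p$. Since $\beta\in L(\alpha)\subseteq L(\alpha^{1/p})$, it suffices to establish $[L(\beta):L] = m$; combined with the upper bound $[L(\alpha^{1/p}):L]\leq m$ from the previous step, this forces $L(\alpha) = L(\alpha^{1/p})$. The equality $[L(\beta):L] = m$ amounts to the standard fact that a separable irreducible polynomial over $F$ remains irreducible over any purely inseparable extension of $F$; I would justify this by noting that restriction identifies the absolute Galois group of $L$ with that of $F$ (because $L$ meets a separable closure of $F$ only in $F$), so the roots of $f$ still form a single Galois orbit over $L$. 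I expect this irreducibility point is the only step requiring real care; everything else is direct manipulation in characteristic $p$.

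For the final numerical bound $[L:F]\leq p^{[K:F]}$, minimality of $r$ forces $[F(\alpha):F(\beta)] = p^r$, because any exponent $s<r$ with $\alpha^{p^s}\in F(\beta)$ would make $\alpha^{p^s}$ separable over $F$. Hence $[K:F]\geq [F(\alpha):F] = mp^r$, and since $p^r\geq r+1$ for $p\geq 2$ and $r\geq 0$, one obtains $m(r+1)\leq mp^r\leq [K:F]$, which gives the desired inequality after exponentiation.
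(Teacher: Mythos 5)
Your proposal is correct and follows essentially the same route as the paper's proof: adjoin $p^{r+1}$-th roots of the coefficients of the minimal polynomial of $\alpha^{p^r}$, use the freshman's dream to see that $\alpha^{1/p}$ has degree at most $m$ over $L$, and conclude $L(\alpha^{p^r})=L(\alpha)=L(\alpha^{1/p})$ from the fact that a separable element keeps its degree over a purely inseparable extension. The degree count at the end, via $[F(\alpha):F(\alpha^{p^r})]=p^r$ and $p^r\geq r+1$, is also the paper's.
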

\begin{proof}
The hypothesis on $r$ implies that $[F(\alpha):F(\alpha^{p^r})]=p^r$.
We set $n=[F(\alpha^{p^r}):F]$. Then $[F(\alpha):F]=n\cdot p^r$.
Let $g(X)$ denote the minimal polynomial of $\alpha^{p^r}$ over $F$.  
We can choose a purely inseparable extension $L/F$ and elements $c_1,\dots,c_n\in L$ such that 
$$g(X)=X^n+c_{1}^{p^{r+1}}X^{n-1}+\ldots+c_{n-1}^{p^{r+1}}X+c_n^{p^{r+1}}$$
in $L[X]$ and $L=F({c}_1,\ldots,{c}_{n})$.
Note that $[L:F]\leq p^{n(r+1)}\leq p^{np^r}$ and $L^{p^{r+1}}\subseteq F$.  

Let $$g'(X)=X^n+{c}_{1}X^{n-1}+\ldots+{c}_{n-1}X+{c}_n\in L[X].$$
Then $g'(X)^{p^{r+1}}=g(X^{p^{r+1}})$, whereby $g'(\sqrt[p]{\alpha})^{p^{r+1}}=g(\alpha^{p^r})=0$.
Since $L/F$ is purely inseparable and $\alpha^{p^r}$ is separable over $F$, we obtain that $$[L(\alpha^{p^r}):L]=[F(\alpha^{p^r}):F]=n=\deg(g')\geq [L(\sqrt[p]{\alpha}):L]\,,$$ whereby 
$L(\alpha^{p^r})=L(\alpha)=L(\sqrt[p]{\alpha})$.
Hence $\alpha\in {L(\alpha)}^{p}$.
\end{proof}

%

For $n\in\nat$ and $a_1,\dots,a_n\in \mg{F}$ we denote by $\la a_1,\dots,a_n\ra$ the $n$-dimensional diagonal quadratic form 
$F^n\to F, (x_1,\dots,x_n)\mapsto \sum_{i=1}^n a_ix_i^2$.

\begin{lem}\label{L:quadratic-slot}
	Let  $K/F$ be a finite field extension. Let  $\alpha\in K^{\times}$ and $g\in F[X]$ with $g(\alpha)\neq 0$ and $\deg(g)\leq 2$. Consider the quadratic form $\varphi=\langle1,\alpha,g(\alpha)\rangle$ over $K$. There exists a generalized $2$-extension $F'/F$ with $[F':F]\leq2^3$ such that $\varphi_{KF'}$ is isotropic.
\end{lem}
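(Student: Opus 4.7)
The plan is to construct $F'$ by adjoining three square roots to $F$ and to exhibit an explicit nontrivial zero of $\varphi$ over $KF'$ whose third coordinate is $1$. Write $g(X) = aX^2 + bX + c$ with $a, b, c \in F$, allowing $a = 0$ when $\deg(g) \leq 1$. Since $\deg(g) \leq 2$, a natural ansatz is to take the first coordinate of the candidate zero to be $F'$-linear in $\alpha$, namely $x = s\alpha + e$ for some $s, e$ to be chosen in a suitable extension.

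Expanding, one computes
\begin{equation*}
x^2 + \alpha y^2 + g(\alpha) = (s^2 + a)\alpha^2 + (2es + y^2 + b)\alpha + (e^2 + c),
\end{equation*}
which vanishes as soon as $s^2 = -a$, $e^2 = -c$, and $y^2 = -b - 2es$. I would therefore build $F'$ by first adjoining $s$ with $s^2 = -a$, then $e$ with $e^2 = -c$, and finally $y$ with $y^2 = -b - 2es$. Each adjunction is either trivial or a quadratic field extension, so $F'/F$ is a generalized $2$-extension with $[F':F] \leq 2^3$, and then $(s\alpha + e, y, 1) \in (KF')^3$ is a nontrivial zero of $\varphi_{KF'}$.

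The construction is uniform in the characteristic of $F$: in characteristic $2$ the term $2es$ vanishes and the condition on $y$ simplifies to $y^2 = -b$, but the argument and the bound remain unchanged. There is no significant obstacle; the key observation is that the degree constraint $\deg(g) \leq 2$ forces the candidate first coordinate $x$ to have $\alpha$-degree at most $1$, after which the three resulting scalar conditions are precisely met by three quadratic adjunctions.
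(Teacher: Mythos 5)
Your proof is correct and takes essentially the same approach as the paper: the paper splits into cases according to $\deg(g)$ and, after writing $g(X)=a(X^2+bX+c)$, adjoins $\sqrt{-a}$, $\sqrt{c}$ and $\sqrt{b-2\sqrt{c}}$ to complete the square, which is exactly your construction up to normalization. Your uniform ansatz $(s\alpha+e,\,y,\,1)$ with $s^2=-a$, $e^2=-c$, $y^2=-b-2es$ merely packages the paper's three cases into a single computation.
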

\begin{proof}
If $g\in {F}^\times$ then $\varphi$ is isotropic over $KF'$ for $F'=F(\sqrt{-g})$.
Assume next that $g(X)=a(X+b)$ for some $a,b\in F$. Then we set $F'=F(\sqrt{-a},\sqrt{b})$. Note that $-g(X)=(\sqrt{-a})^2(X+(\sqrt{b})^2)$.
Hence $-g(\alpha)$ is represented by the quadratic form $\langle1,\alpha\rangle$ over $F'(\alpha)$.
Therefore $\varphi_{KF'}$ is isotropic.

Assume finally that $g(X)=a(X^2+bX+c)$ with $a,b,c\in F$. In this case we set $F'=F(\sqrt{-a},\sqrt{c},\sqrt{b-2\sqrt{c}})$. Then $-g(X)=(\sqrt{-a})^2((X+\sqrt{c})^2+(b-2\sqrt{c})X)$.
Hence $-g(\alpha)$ is represented by the quadratic form $\langle1, \alpha\rangle$  over $F'(\alpha)$.
Therefore $\varphi_{KF'}$ is isotropic.

In each case we obtain that $\varphi_{KF'}$ is isotropic and that $F'/F$ is a generalized $2$-extension with $[F':F]\leq 2^{\deg(g)+1}\leq 2^3$.
\end{proof}

Let $K/F$ be a finite field extension and $s: K\to F$ a nonzero $F$-linear functional. Let $\varphi : V\to K$ be a quadratic form over $K$. 
We consider $V$ as an $F$-vector space and obtain a quadratic form  $s\circ\varphi:V\to F$ over $F$.

In the proof of the next result we use the standard concepts of 
isometry $\simeq$ and orthogonal sum $\perp$ for quadratic forms.

\begin{prop}\label{P:form-split-2ext-basefield}
Let $K/F$ be a finite field extension and $r=[K:F]$. Let $\varphi$ be a quadratic form over $K$ defined on a $K$-vector space $V$ with $\dim_KV\geq 
\max(3,\frac{r}{2})$. 
There exists a {generalized} $2$-extension $F'/F$ with $[F':F]\leq2^{r}$ and such that $\varphi_{KF'}$ is isotropic.
\end{prop}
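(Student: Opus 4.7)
My plan is to turn $\varphi$ into a system of $F$-quadratic forms and combine \Cref{T:sys-bound} with \Cref{L:quadratic-slot}. Fix an $F$-basis $e_1,\ldots,e_r$ of $K$ and decompose $\varphi(x) = \sum_{i=1}^{r} \varphi_i(x)\,e_i$, so that each $\varphi_i$ is an $F$-quadratic form on $V$ regarded as an $F$-vector space of dimension $rn$, where $n = \dim_K V$. A simultaneous zero of the system $(\varphi_1, \ldots, \varphi_r)$ corresponds to a zero of $\varphi$ in $K \otimes_F F'$, from which one extracts an isotropic vector of $\varphi$ over the compositum $KF'$ (with some care required when $K \otimes_F F'$ is not a field, which can be arranged using that the relevant residue fields all embed into $KF'$).

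The first thing to try is to apply \Cref{T:sys-bound} directly to this system, which works as soon as $rn \geq \frac{r(r+1)}{2}+1$. Under the hypothesis $n \geq \max(3, r/2)$, this dimension bound is automatic for $r \leq 4$ and thus immediately delivers a generalized $2$-extension of degree $\leq 2^r$ making $\varphi$ isotropic over $KF'$.

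For $r \geq 5$, the direct dimension count falls short, and I would instead reduce to \Cref{L:quadratic-slot}. After scaling so that $\varphi$ represents $1$, split off a subform $\la 1 \ra$ and then select a second vector so that the form contains $\la 1, \alpha \ra$ for some $\alpha \in K^{\times}$. It remains to find, possibly after a modest $2$-extension, a vector in the $(n-2)$-dimensional residual subform on which the value lies in the $F$-subspace $F + F\alpha + F\alpha^2 \subseteq K$. This is a system of $r-3$ quadratic conditions on a space of $F$-dimension $r(n-2)$, to which \Cref{T:sys-bound} applies, producing a $2$-extension of degree $\leq 2^{r-3}$. The resulting form $\la 1, \alpha, g(\alpha) \ra$ with $\deg g \leq 2$ is then split by \Cref{L:quadratic-slot} over a further $2$-extension of degree $\leq 2^3$, giving a total bound of $2^{r-3}\cdot 2^3 = 2^r$.

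The case $\cchar F = 2$ with $K/F$ inseparable is reduced to the separable one using \Cref{L:Albert}, which provides a suitable purely inseparable auxiliary extension compatible with the degree bound. The main obstacle is the fine-tuning of the dimension inequalities in the borderline cases—most notably when $r$ is near $6$ and $n = \lceil r/2 \rceil$, where the bound in the slot reduction is tight—and where an alternative choice of the intermediate element $\alpha$ or a direct appeal to a smaller subform may be needed to keep the total degree within $2^r$.
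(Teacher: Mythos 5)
Your strategy---applying \Cref{T:sys-bound} directly to the system $(\varphi_1,\dots,\varphi_r)$ for small $r$, and otherwise reducing to $\la 1,\alpha,g(\alpha)\ra$ and \Cref{L:quadratic-slot} via a system of $r-3$ coordinate conditions---is exactly the paper's route for $r\geq7$, and your counts do work for $r\leq 4$ (direct) and $r=5$ (slot reduction). But there is a genuine gap at $r=6$, $n=\dim_KV=3$, the very case you flag without resolving: the direct count gives $rn=18<22=\frac{r(r+1)}{2}+1$, and the slot reduction gives $r-3=3$ forms on a space of $F$-dimension $r(n-2)=6<7=\frac{(r-3)(r-2)}{2}+1$, so \Cref{T:sys-bound} applies in neither version. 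No choice of $\alpha$ changes these dimension counts, and a $3$-dimensional form has no smaller subform to retreat to. The paper closes this case with an idea absent from your proposal: for $r\leq6$ write $\varphi\simeq\la1,\alpha,\beta\ra$ and enlarge it to the $2$-fold Pfister form $\pi=\la1,\alpha,\beta,\alpha\beta\ra$, yielding a system of $r$ forms in $4r$ variables with $4r>\frac{r(r+1)}{2}$ precisely when $r\leq6$; once $\pi_{KF'}$ is isotropic it is hyperbolic, so its $3$-dimensional subform $\varphi$ is isotropic. Without this (or a substitute) your argument does not prove the case $r=6$.

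Three secondary points. First, passing from an isotropic vector of the system over $F'$ to one of $\varphi$ over $KF'$ needs more care than ``the residue fields embed into $KF'$'': the components of $K\otimes_FF'$ are composita of $K$ with \emph{conjugates} of $F'$, and in the slot reduction the conclusion that $\varphi'(v)$ lies in $F'+F'\alpha+F'\alpha^2$ uses that $(1,\alpha,\dots)$ remains a basis over $F'$; the paper avoids all of this by first replacing $F$ by the maximal $2$-extension $K_0$ of $F$ inside $K$, after which $K$ is linearly disjoint from every $2$-extension of the base. Second, your slot reduction tacitly assumes $(1,\alpha,\alpha^2)$ is $F$-linearly independent; the degenerate case $[F(\alpha):F]\leq2$ must be handled separately (easily, via $F'=F(\sqrt{-\alpha})$). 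Third, in characteristic $2$ the paper does not reduce to the separable case but applies \Cref{L:Albert} directly to produce a purely inseparable splitting extension, which is necessary since your diagonalization steps ($\varphi\simeq\la1,\alpha\ra\perp\varphi'$, $\varphi'\simeq\la g(\alpha)\ra\perp\varphi''$) are unavailable in characteristic $2$.
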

\begin{proof}
We may assume that $\varphi$ is anisotropic, as otherwise we may take $F'=F$. In particular $\varphi$ is regular. 
We may further scale $\varphi$ without affecting the claim, hence we may assume that $\varphi$ represents $1$. 
If $\car(F)=2$, then as $\dim_KV\geq3$, it follows by \Cref{L:Albert} that there exists a purely inseparable field extension $F'/F$ with $[F':F]\leq 2^r$ such that $\varphi_{KF'}$ is isotropic, and $F'/F$ is clearly a generalized $2$-extension.
 
 Assume now that $\car(F)\neq2$.
We first prove the statement under the additional assumption that $K$ does not contain any quadratic extension of $F$. Note that this implies that $K/F$ is linearly disjoint from any $2$-extension of $F$.	
	

	 
We first assume that $r\geq7$. Set $n=\dim_KV$. Then $n\geq \frac{r}2 >3$. We write $\varphi\simeq\langle1,\alpha\rangle\perp\varphi'$ with $\alpha\in K^{\times}$ and a quadratic form $\varphi'$ defined on a $K$-vector space $V'$ with $\dim_KV'=n-2$. 
If $(1,\alpha,\alpha^2)$ is $F$-linearly dependent, then $[F(\alpha):F]\leq 2$ and hence $F'=F(\sqrt{-\alpha})$ is a $2$-extension of $F$ with $[F':F]\leq 4$, and  $\varphi$ becomes isotropic over $KF'$. Assume now that $(1,\alpha,\alpha^2)$ is $F$-linearly independent. Then there exists an $F$-basis $(\alpha_0,\ldots,\alpha_{r-1})$ of $K$ with $\alpha_i=\alpha^i$ for $i=0,1,2$. 
For $0\leq i<r$, let $s_i : K\to F$ be the $F$-linear functional given by $\alpha_j\mapsto \delta_{ij}$ for $0\leq j<r$ (where $\delta_{ij}$ denotes the Kronecker delta).  
We view $V'$ as an $F$-vector space and obtain a system  of quadratic forms $\phi=(s_{3}\circ\varphi',\ldots,s_{r-1}\circ\varphi')$ on $V'$. 
Since $\phi$ is a system of $r-3$ quadratic forms over $F$ and since
    $$\dim_F V'=(n-2)\cdot r>\left(\mbox{$\frac{r-1}{2}$}+\mbox{$\frac{3}{r}$}\right)\cdot r-2r=\mbox{$\frac{1}2$}(r-3)(r-2),$$ 
\Cref{T:sys-bound} implies that there exists a $2$-extension $F'/F$ with $[F':F]\leq2^{r-3}$ and such that $\phi_{F'}$ is isotropic. Let $v\in V'\otimes_F F'$ be a nonzero isotropic vector of $\phi_{F'}$. 
Since $F'/F$ is linearly disjoint from $K/F$, $(\alpha_0,\ldots,\alpha_{r-1})$ is an $F'$-basis of $KF'$ and we get that $s_i(\varphi'_{KF'}(v))=0$ for $3\leq i<r$. 
We conclude that $\varphi'_{KF'}(v)\in F'\oplus F'\alpha\oplus F'\alpha^2$.
Hence $\varphi'_{KF'}(v)=g(\alpha)$ for some $g\in F'[X]$ with $\deg(g)\leq 2$. If $g(\alpha)=0$, then $\varphi'$ is isotropic over $KF'$ and we are done. Assume now that $g(\alpha)\neq0$. 
Since the quadratic form $\varphi'$ is regular and $\car(F)\neq 2$, it follows that $\varphi'_{KF'}\simeq\langle g(\alpha)\rangle\perp \varphi''$ for some quadratic form $\varphi''$ over $KF'$.
Thus $$\varphi_{KF'}\simeq\langle1,\alpha,g(\alpha)\rangle\perp\varphi''$$ over $KF'$.
By \Cref{L:quadratic-slot} there exists a $2$-extension $F''/F'$ with $[F'':F']\leq2^3$ and such that the form 
$\langle1,\alpha,g(\alpha)\rangle$ is isotropic over $KF''$.
Then the quadratic form $\varphi_{KF''}$ is isotropic, and $F''/F$ is a $2$-extension with $$[F'':F]=[F'':F']\cdot[F':F]\leq2^3\cdot2^{r-3}=2^r\,.$$
This shows the statement in the case where $r\geq 7$.
	
We now consider the case where $r\leq 6$. Note that $\dim_KV\geq3$.
We may assume without loss of generality that $\dim_KV=3$. 
Since $\car(F)\neq 2$, we can write $\varphi\simeq\langle1,\alpha,\beta\rangle$ for certain $\alpha,\beta\in K^{\times}$. 
We consider the $2$-fold Pfister form   $\pi=\langle1,\alpha,\beta,\alpha\beta\rangle$  over $K$. 
Let $(\alpha_0,\ldots,\alpha_{r-1})$ be an $F$-basis of $K$. For $0\leq i<r$, let $s_i : K\to F$ be the $F$-linear functional given by $\alpha_j\mapsto \delta_{ij}$ for $0\leq j<r$.
We consider the system $\phi=(s_{0}\circ\pi,\ldots,s_{r-1}\circ\pi)$ defined on the $F$-vector space $K^4$. 
Hence $\phi$ is a system of $r$ quadratic forms in $4\cdot r$ variables over $F$. 
As $r\leq6$ we have that $4\cdot r>\frac{r(r+1)}{2}$.
Hence, by \Cref{T:sys-bound}, there exists a $2$-extension $F'/F$ with $[F':F]\leq2^r$ such that $\phi$ becomes isotropic over $F'$. 
Since $F'/F$ is linearly disjoint from $K/F$,
it follows that $\pi_{KF'}$ is isotropic. 
Since $\pi$ is a $2$-fold Pfister form, it follows that $\pi_{KF'}$ is hyperbolic, and as $\varphi$ is a $3$-dimensional subform of $\pi$ we conclude that $\varphi_{KF'}$ is isotropic.

Hence we have proven the statement under the additional assumption that $K$ does not contain any quadratic extension of $F$.
In the general case we choose a maximal $2$-extension $K_0/F$ contained in $K/F$.
Then $K$ does not contain any quadratic extension of $K_0$.
By applying the previous argument to $K/K_0$ we obtain that
there exists a $2$-extension $F'/K_0$ such that $\varphi_{KF'}$ is isotropic and $[F':K_0]\leq2^{[K:K_0]}$. Then $F'/F$ is a $2$-extension, and furthermore 
$$[F':F]=[F':K_0]\cdot[K_0:F]\leq 2^{[K:K_0]}\cdot [K_0:F]\leq 2^{[K:F]}= 2^r\,.\vspace{-7mm}$$
\end{proof} 

For the special case of a quadratically closed base field, we obtain a slight improvement to the  bound for the growth of the $u$-invariant under finite field extensions obtained in \cite[Theorem 2.10]{Leep84}.

\begin{cor}
\label{C:quadclose system-bound}
	Let $F$ be quadratically closed and let $K/F$ be a finite field extension with $[K:F]=r$. Then every quadratic form  over $K$ of dimension greater than or equal to $\max(3,\frac{r}{2})$ is isotropic.
\end{cor}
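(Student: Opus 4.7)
The plan is to deduce this corollary directly from \Cref{P:form-split-2ext-basefield}. Let $\varphi$ be a quadratic form over $K$ defined on a $K$-vector space $V$ with $\dim_KV\geq\max(3,\frac{r}{2})$. The hypotheses of \Cref{P:form-split-2ext-basefield} are satisfied, so the proposition yields a generalized $2$-extension $F'/F$ with $[F':F]\leq 2^r$ such that $\varphi_{KF'}$ is isotropic.

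The key observation is that when $F$ is quadratically closed, the only generalized $2$-extension of $F$ is $F$ itself. Indeed, by the definition recalled in Section~2, a generalized $2$-extension $F'/F$ is obtained as a tower $F=K_0\subseteq K_1\subseteq\cdots\subseteq K_s=F'$ where each $K_i/K_{i-1}$ is a quadratic field extension; since $F$ has no quadratic field extension at all, this forces $s=0$ and $F'=F$. (Note that the notion of quadratic closedness employed here excludes \emph{all} quadratic extensions, separable or inseparable, so this argument applies in every characteristic.)

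Combining these two observations, we obtain $F'=F$ and hence $\varphi_{KF'}=\varphi$ is isotropic over $K$. There is no technical obstacle; the proof is essentially a specialization of the main proposition to the case where the only available $2$-extension is trivial, which collapses the bound in \Cref{P:form-split-2ext-basefield} to the assertion of isotropy over $K$ itself.
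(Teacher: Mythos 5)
Your proof is correct and follows exactly the paper's argument: apply \Cref{P:form-split-2ext-basefield} and note that a quadratically closed field admits no proper generalized $2$-extension, so $F'=F$. Your explicit justification of the latter point (unwinding the tower definition) is a nice addition but does not change the approach.
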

\begin{proof}
By the hypothesis $F$ has no proper generalized $2$-extension.
Hence the statement follows directly from \Cref{P:form-split-2ext-basefield}.
\end{proof}

\section{Quaternion algebras defined over a finite extension} 

The following sections are an addition to the classical theory of central simple algebras over a field as it is covered in many textbooks, for example in \cite{Alb39}, \cite{Draxl} or \cite{GS06}.
Given a central simple algebra $A$, we use the fundamental relations between the degree, index and exponent of $A$,  which we denote by $\deg A$, $\ind A$ and $\exp A$, respectively.

A \emph{quaternion algebra} over a field is a central simple algebra of degree two.
A quaternion algebra is \emph{split} if it is isomorphic to a $(2\times 2)$-matrix algebra.
Any quaternion algebra is either split or a division algebra.

Let $F$ be a field. We will use
 a characteristic-independent description of quaternion algebras by generators and relations (see \cite[Chapter~IX, Section 10]{Alb39}):
For any $a\in F$ with $-4a\neq1$ and $b\in F^{\times}$, the $4$-dimensional $F$-vector space with basis $(1,i,j,ij)$ is endowed with an $F$-algebra multiplication given by the relations
$$i^2-i=a,\quad j^2=b \quad \text{and}\quad ji=(1-i)j;$$
this construction yields an $F$-quaternion algebra, which is denoted by $[a,b)_F$. 
One easily checks that every $F$-quaternion algebra is isomorphic to $[a,b)_F$ for certain $a\in F$ and $b\in F^{\times}$.

Let $Q$ be an $F$-quaternion algebra. 
The reduced norm map $\n_Q:Q\to F$ is a quadratic $2$-fold Pfister form over $F$. 
Moreover, $\n_Q$ is isotropic if and only if $Q$ is split (see \cite[Corollary 12.5]{EKM}).

For a central simple $F$-algebra $A$ and a field extension $K/F$, we have that $A\otimes_FK$ is a central simple $K$-algebra, which we denote by $A_{K}$.
%

\begin{qu}\label{Q}
Let $K/F$ be a finite field extension and let $Q$ be a $K$-quaternion algebra. Does there exist a $2$-extension $F'/F$ such that $Q_{KF'}$ is split?
\end{qu}

When the characteristic is different from $2$, we obtain the following positive answer to this question in small degrees for the field extension.

\begin{thm}\label{C:quat-over-deg8ext}
Assume that $\car (F)\neq 2$.
Let $K/F$ be a finite field extension with $[K:F]\leq 8$ 
and let $Q$ be a $K$-quaternion algebra. 
Then there exists a $2$-extension $F'/F$ with $[F':F]\leq2^{[K:F]}$ and such that $Q_{KF'}$ is split.
\end{thm}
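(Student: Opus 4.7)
The plan is to reduce the splitting problem for $Q$ to the isotropy problem for its reduced norm form and then invoke \Cref{P:form-split-2ext-basefield}.

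First I would recall the well-known fact already cited in the excerpt: for a quaternion algebra $Q$ over $K$, the reduced norm $\n_Q : Q \to K$ is a $4$-dimensional (regular) quadratic form, and $Q$ is split if and only if $\n_Q$ is isotropic. Thus, to split $Q_{KF'}$, it suffices to find a $2$-extension $F'/F$ over which the norm form $\n_Q$, viewed as a quadratic form on the $4$-dimensional $K$-vector space $Q$, becomes isotropic.

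Next I would verify that the dimension condition of \Cref{P:form-split-2ext-basefield} is satisfied. Setting $r = [K:F]$, that proposition requires a quadratic form of dimension at least $\max(3, r/2)$ over $K$. Since $\dim_K Q = 4$ and $r \leq 8$, we indeed have $\max(3, r/2) \leq 4$; this is precisely the numerical reason why the bound $[K:F] \leq 8$ appears in the hypothesis. Applying \Cref{P:form-split-2ext-basefield} to $\varphi = \n_Q$, one obtains a generalized $2$-extension $F'/F$ with $[F':F] \leq 2^r = 2^{[K:F]}$ such that $(\n_Q)_{KF'}$ is isotropic.

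Finally, since $\car(F) \neq 2$, every quadratic field extension of $F$ is separable, so the generalized $2$-extension $F'/F$ is in fact a $2$-extension in the usual sense, as noted in the paper right before \Cref{T:sys-bound}. The reduced norm of $Q_{KF'}$ is the scalar extension $(\n_Q)_{KF'}$, which is isotropic; hence $Q_{KF'}$ is split, and $F'$ has the required degree bound.

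There is essentially no obstacle here, because the hypothesis $[K:F] \leq 8$ is perfectly tuned to make $4 \geq r/2$, which is exactly what is needed to feed $\n_Q$ into \Cref{P:form-split-2ext-basefield}. The only point that deserves a brief remark is the passage from generalized $2$-extension to $2$-extension, which is immediate from the characteristic assumption.
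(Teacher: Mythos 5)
Your proposal is correct and is essentially identical to the paper's own proof: both apply \Cref{P:form-split-2ext-basefield} to the reduced norm form $\n_Q$, using $\dim_K Q = 4 \geq \max(3, r/2)$ for $r \leq 8$, and conclude via the equivalence between splitting of $Q$ and isotropy of $\n_Q$. Your extra remark on passing from generalized $2$-extensions to $2$-extensions in characteristic not $2$ is a correct and harmless elaboration.
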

\begin{proof}
Set $r=[K:F]$. Since $r\leq8$, we have that $\dim_K Q=4\geq\max(3,\frac{r}{2})$. Hence, by \Cref{P:form-split-2ext-basefield}, $\n_Q$ becomes isotropic over $KF'$ for some  $2$-extension $F'/F$ with $[F':F]\leq2^r$. 
It follows that $Q_{KF'}$ is split.
%
\end{proof}

If $\car(F)\neq 2$, we do not know how to answer \Cref{Q} for quaternion algebras defined over a finite field extension $K/F$ with $[K:F]>8$.
In characteristic $2$, we may consider a weaker version of \Cref{Q}, where 
 $F'/F$ is allowed to be a generalized $2$-extension  (which may be purely inseparable) instead of a $2$-extension.
The following statement gives a positive answer to this variation of \Cref{Q}.

\begin{prop}\label{C:multiquad-split-quaternion-char2}
Assume that $\car (F)=2$.
Let $K/F$ be a finite field extension
and let $Q$ be a $K$-quaternion algebra. 
Then there exists a purely inseparable field extension $F'/F$ with $[F':F]\leq 2^{[K:F]}$ such that $Q_{KF'}$ is split. Moreover, $F'/F$ can be chosen as a multiquadratic extension when $K/F$ is separable.
\end{prop}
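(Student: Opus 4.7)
The plan is to reduce the splitting of $Q$ to a quadratic question about the slot parameter $b$ in a presentation $Q\cong [a,b)_K$, and then to apply \Cref{L:Albert} directly to $b\in\mg{K}$ with $p=2$.

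More precisely, I would first choose $a\in K$ and $b\in\mg{K}$ with $Q\cong [a,b)_K$. Applying \Cref{L:Albert} to $\alpha=b$ with $p=2$ produces a purely inseparable extension $L/F$ with $[L:F]\leq 2^{[K:F]}$ and $b\in L(b)^2$; since $L(b)\subseteq KL$, there is some $c\in KL$ with $b=c^2$. In $Q_{KL}$ the generator $j$ then satisfies $j^2=b=c^2$, and as $c$ lies in the center and $\car F=2$, we have $(j+c)^2=j^2+c^2=0$. Since $(1,i,j,ij)$ remains a $KL$-basis of $Q_{KL}$, the element $j+c$ is nonzero; a nonzero nilpotent cannot exist in a division algebra, so the degree-two central simple $KL$-algebra $Q_{KL}$ is split. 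Taking $F':=L$ establishes the first assertion.

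For the refinement, assume $K/F$ is separable. Then $b\in K$ is separable over $F$, so the integer appearing in \Cref{L:Albert} (the minimal $r\in\nat$ with $b^{2^r}$ separable over $F$) equals $0$. Tracing this through the lemma yields an $L$ with $L^2\subseteq F$, presented as $L=F(c_1,\ldots,c_n)$ with each $c_i^2\in F$; this is by definition a multiquadratic extension of $F$, and the bound becomes $[L:F]\leq 2^{[F(b):F]}\leq 2^{[K:F]}$. The only step that requires a moment of thought is the passage from $b\in L(b)^2$ to the splitting of $Q_{KL}$, but it is immediate once one observes that $(j+c)^2=0$ in characteristic two.
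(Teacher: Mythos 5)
Your proposal is correct and follows essentially the same route as the paper: present $Q\simeq[a,b)_K$, apply \Cref{L:Albert} to the slot $b$ with $p=2$ to make $b$ a square in $KL$, and conclude splitness (your explicit nilpotent computation $(j+c)^2=0$ just spells out the step the paper leaves implicit). The separable case via $r=0$ and $L^2\subseteq F$ also matches the paper's argument.
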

\begin{proof}	
We have $Q\simeq[\alpha,\beta)_K$ for certain $\alpha\in K$ and $\beta\in K^{\times}$.
By \Cref{L:Albert}, there exists a purely inseparable field extension $F'/F$ such that $\beta\in {F'(\beta)}^{\times 2}$ and $[F':F]\leq 2^{[K:F]}$. Then $Q_{KF'}$ is split.
When $K/F$ is separable, it  follows further by \Cref{L:Albert}  that $F'^2\subseteq F$, whereby $F'/F$ is multiquadratic.
\end{proof}

\section{The corestriction of a quaternion algebra}

In this section we provide examples showing that the bounds in \Cref{C:quat-over-deg8ext} and \Cref{C:multiquad-split-quaternion-char2} on the degree of the extension $F'/F$ are optimal when the degree of $K/F$ is odd. 
First we recall the definition of the corestriction of a central simple algebra and  some of its properties. For this purpose we follow \cite[Part I, Section 8]{Draxl}.

Let $K/F$ be a finite Galois extension and let $G$ denote its Galois group. Let $A$ be a central simple $K$-algebra. For $\sigma\in G$ we define the conjugate algebra $\ca{\sigma}{A}$ as follows: the underlying ring structure of $\ca{\sigma}{A}$ is the same as of $A$ and the scalar multiplication from $K$ is given by $\lambda\cdot a=\sigma(\lambda)a$ for any $\lambda\in K$ and $a\in A$. Note that $\ca{\sigma}{A}$ is a central simple $K$-algebra with $\deg \ca{\sigma}{A}=\deg A$. 

Now assume that $K/F$ is a cyclic extension and let $\sigma$ be a generator of $G$. Set $r=[K:F]$. Consider the central simple $K$-algebra $$A^{\otimes G}=A\otimes\ca{\sigma}{A}\otimes\cdots\otimes\ca{\sigma^{r-1}}{A}\,,$$ where the tensor product is taken over $K$. For $0\leq i< r$, the $F$-automorphism $\sigma^i$ of $K$ induces an $F$-algebra automorphism on $A^{\otimes G}$ such that 
$$\sigma^i\cdot (a_0\otimes a_1\otimes\cdots\otimes a_{r-1})=a_{r-i}\otimes\cdots\otimes a_{r-1}\otimes a_0\otimes \cdots\otimes a_{r-1-i}$$ 
for any $(a_0,\dots, a_{r-1})\in\prod_{i=0}^{r-1}\,\ca{\sigma^{i}}{A}$. 
This determines a group action of $G$ on the $K$-algebra $A^{\otimes G}$. The \emph{corestriction of} $A$, denoted $\corr_{K/F} A$, is defined to be the $F$-subalgebra of $A^{\otimes G}$ consisting of the elements that are fixed by this action of~$G$. Clearly, $\corr_{K/F}A$ does not depend, up to $F$-isomorphism, on the choice of $\sigma$. 

\begin{prop}\label{P:properties of cor}
	Let $K/F$ be a cyclic Galois extension. Let $A$ and $B$ be central simple $K$-algebras. The following hold:
	\begin{enumerate}[$(a)$]
	    \item $(\corr_{K/F}A)\otimes_F K\simeq A^{\otimes G}$.
	    \item $\corr_{K/F} A$ is a central simple $F$-algebra and $\deg(\corr_{K/F}A)=(\deg A)^{[K:F]}$.
	    \item If $A$ is split then $\corr_{K/F}A$ is split.
	    \item $\corr_{K/F}(A\otimes_K B)\simeq_F(\corr_{K/F}A)\otimes_F(\corr_{K/F}B)$.
	\end{enumerate}
\end{prop}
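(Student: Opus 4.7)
The guiding principle is Galois descent: $A^{\otimes G}$ carries a semilinear $G$-action by construction, and $\corr_{K/F}A$ is its $G$-invariant subalgebra. The key tool is Speiser's Lemma (Hilbert~90 for modules), which states that for any $K$-vector space $M$ with semilinear $G$-action the natural map $M^G\otimes_F K\to M$ is a $K$-linear isomorphism. Applying this to $M=A^{\otimes G}$, which is an $F$-algebra with semilinear $G$-action by $F$-algebra automorphisms (so the invariants form an $F$-subalgebra), immediately yields $(a)$ as an isomorphism of $K$-algebras.

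For $(b)$, first observe that each $\ca{\sigma^i}{A}$ is a central simple $K$-algebra of the same degree as $A$, since centrality and simplicity are preserved when the $K$-action is twisted by a field automorphism. Hence $A^{\otimes G}$, being a tensor product over $K$ of central simple $K$-algebras, is central simple over $K$ of dimension $(\deg A)^{2r}$ with $r=[K:F]$. By $(a)$, $\dim_F\corr_{K/F}A=(\deg A)^{2r}$. Central simplicity of $\corr_{K/F}A$ over $F$ descends from that of $A^{\otimes G}$: any proper two-sided ideal of $\corr_{K/F}A$ would generate a proper $G$-stable ideal of $A^{\otimes G}$ after base change to $K$, contradicting simplicity, and the center of $\corr_{K/F}A$ base-changed to $K$ equals $Z(A^{\otimes G})=K$, forcing $Z(\corr_{K/F}A)=F$. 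The formula $\deg(\corr_{K/F}A)=(\deg A)^r$ then follows by taking square roots.

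For $(c)$, we may assume $A=\End_K(V)$ with $\dim_KV=n=\deg A$. One identifies $A^{\otimes G}$ with $\End_K(V\otimes_K\ca{\sigma}{V}\otimes_K\cdots\otimes_K\ca{\sigma^{r-1}}{V})$, where $\ca{\sigma^i}{V}$ denotes $V$ with its $K$-action twisted by $\sigma^i$. The semilinear $G$-action on this tensor product (cyclic permutation combined with the appropriate semilinear scalar twist) admits, by Galois descent for modules, an $F$-form $W$ of dimension $n^r$, and taking $G$-invariants of the endomorphism algebra yields $\End_F(W)$, which is a split central simple $F$-algebra.

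For $(d)$, the canonical rearrangement of tensor factors provides a $G$-equivariant $K$-algebra isomorphism $(A\otimes_K B)^{\otimes G}\simeq A^{\otimes G}\otimes_K B^{\otimes G}$ which sends $G$-invariants on the left onto the image of $\corr_{K/F}A\otimes_F\corr_{K/F}B$ inside the right-hand side; descending through $(a)$ gives the claimed $F$-isomorphism. The main obstacle is setting up the semilinear $G$-action in $(c)$ and the tensor rearrangement in $(d)$ with explicit conventions and verifying $G$-equivariance carefully, since cyclic permutations composed with semilinear scalar twists are easy to misalign by one index; once those conventions are pinned down, the remaining verifications are routine consequences of Speiser's Lemma.
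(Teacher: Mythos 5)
Your proof is correct. The paper offers no argument of its own for this proposition---its ``proof'' is the citation to Draxl, Section 8---and your Galois-descent argument via Speiser's Lemma (semilinearity of the cyclic shift giving $(a)$, descent of central simplicity and the dimension count for $(b)$, descent of the underlying module for $(c)$, and the equivariant rearrangement plus dimension count for $(d)$) is essentially the standard proof found in that reference, so it matches the intended argument.
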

\begin{proof}
See \cite[Section 8]{Draxl}. 
\end{proof}

\begin{cor}\label{L:commutativity of cor-res}
	Let $K/F$ be a cyclic Galois extension. Let $L/F$ be a field extension which is linearly disjoint from $K/F$. Let $A$ be a central simple $K$-algebra. We have that $(\corr_{K/F}A)\otimes_F L\simeq\corr_{KL/L}(A_{KL})$.
\end{cor}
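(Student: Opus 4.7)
The strategy is to base-change the defining isomorphism $(\corr_{K/F}A)\otimes_F K\simeq A^{\otimes G}$ from \Cref{P:properties of cor}(a) and then apply Galois descent. Since $L/F$ is linearly disjoint from the Galois extension $K/F$, the natural map $K\otimes_F L\to KL$ is an isomorphism, $KL/L$ is again cyclic Galois, and the restriction $\Gal(KL/L)\to\Gal(K/F)=G$ is an isomorphism. A generator $\sigma$ of $G$ thus lifts uniquely to a generator $\tilde\sigma$ of $\Gal(KL/L)$, and I will identify the two Galois groups via this isomorphism.

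The first technical step would be to produce, for each $i$ with $0\leq i<r$, a canonical $KL$-algebra isomorphism
$$\ca{\sigma^i}{A}\otimes_K KL\;\simeq\;\ca{\tilde\sigma^i}{(A_{KL})}$$
by unpacking the definitions of the conjugate algebra and of extension of scalars: both sides have underlying additive group $A\otimes_K KL$, and one checks that the twisted $KL$-action on the right agrees with the action coming from scalars $KL$ on the right factor of the left. Taking the tensor product over $KL$ of these identifications yields a $G$-equivariant $KL$-algebra isomorphism $A^{\otimes G}\otimes_K KL\simeq (A_{KL})^{\otimes G}$, where on the left $G$ acts through its action on $A^{\otimes G}$ combined with the Galois action on the second factor $KL$, and on the right through the analogous action used to define $\corr_{KL/L}(A_{KL})$.

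By \Cref{P:properties of cor}(a), the isomorphism $(\corr_{K/F}A)\otimes_F K\simeq A^{\otimes G}$ is $G$-equivariant when $G$ acts on the left through the second tensor factor only. Tensoring with $L$ over $F$ and using $K\otimes_F L\simeq KL$, combined with the previous step, produces a $G$-equivariant $KL$-algebra isomorphism
$$(\corr_{K/F}A)\otimes_F KL\;\simeq\;(A_{KL})^{\otimes G},$$
in which $G$ acts trivially on $\corr_{K/F}A$ and on $L$ but through the Galois action on $KL$. Taking $G$-invariants on both sides, the left-hand side becomes $(\corr_{K/F}A)\otimes_F L$ by Galois descent (since $KL^G=L$ and $\corr_{K/F}A\otimes_F L$ is an $L$-form of $\corr_{K/F}A\otimes_F KL$), while the right-hand side equals $\corr_{KL/L}(A_{KL})$ by \Cref{P:properties of cor}(a) applied to $A_{KL}$ and the extension $KL/L$.

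The main obstacle I expect is the bookkeeping in the first step: verifying that the isomorphism $\ca{\sigma^i}{A}\otimes_K KL\simeq \ca{\tilde\sigma^i}{(A_{KL})}$ is not only a $KL$-algebra isomorphism but also compatible with the cyclic permutation action of $G$ on the tensor factors defining $A^{\otimes G}$. Once this compatibility is in place, the rest of the argument is a formal application of Galois descent and of the previously established properties of the corestriction.
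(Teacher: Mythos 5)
Your argument is correct and would yield the result, but it takes a genuinely different route from the paper. The paper's proof does not base-change the isomorphism of \Cref{P:properties of cor}(a) and then descend. Instead, after identifying $G=\Gal(K/F)$ with $\Gamma=\Gal(KL/L)$ via restriction, it views $A^{\otimes G}$ directly as a $K$-subalgebra of $(A_{KL})^{\otimes \Gamma}$, compatibly with the two group actions, so that $\corr_{K/F}A$ becomes an $F$-subalgebra of $\corr_{KL/L}(A_{KL})$; since $\corr_{K/F}A\otimes_F L$ is a central simple $L$-algebra, the induced map $\corr_{K/F}A\otimes_F L\to\corr_{KL/L}(A_{KL})$ is injective, and it is surjective because both sides have degree $(\deg A)^{[K:F]}$ by \Cref{P:properties of cor}(b). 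This sidesteps the two points where your version needs extra care: first, \Cref{P:properties of cor}(a) as stated only asserts the \emph{existence} of an isomorphism $(\corr_{K/F}A)\otimes_F K\simeq A^{\otimes G}$, whereas you need the natural map $c\otimes\lambda\mapsto\lambda c$ to realize it $G$-equivariantly (this is true and is how (a) is proved in Draxl, but it is a strengthening of what is literally cited); second, the Galois-descent step is replaced in the paper by a one-line dimension count. What your route buys in exchange is that, once the bookkeeping identifying $\ca{\sigma^i}{A}\otimes_K KL$ with $\ca{\tilde\sigma^i}{(A_{KL})}$ and its compatibility with the cyclic permutation action is written out (the point you rightly flag as the main obstacle, and which does check out), the conclusion is purely formal and does not invoke simplicity or degrees. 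Both proofs ultimately rest on the same identification of $A^{\otimes G}\otimes_K KL$ with $(A_{KL})^{\otimes\Gamma}$.
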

\begin{proof}
Note that $KL/L$ is a Galois extension.
We denote by $G$ and $\Gamma$ the Galois groups of the extensions $K/F$ and $KL/L$, respectively.
Since $L/F$ is linearly disjoint from $K/F$, the map
$\Gamma\to G,\tau\to \tau|_K$ is an isomorphism.
We can view $A^{\otimes G}$ as a $K$-subalgebra of  $(A_{KL})^{\otimes \Gamma}$. 
In this way, $\corr_{K/F}A$ becomes an $F$-subalgebra of $\corr_{KL/L}(A_{KL})$.
Since these are central simple algebras over $F$ and over $L$, respectively, we 
obtain a natural embedding of central simple $L$-algebras $\corr_{K/F}A\otimes_F L\to\corr_{KL/L}(A_{KL})$.
Since $$\deg (\corr_{K/F}A\otimes_F L)=(\deg A)^{[K:F]}=(\deg A_{KL})^{[KL:L]}=\deg(\corr_{KL/L}(A_{KL})),$$ we obtain that this embedding is an isomorphism.
%
%
\end{proof}

\begin{cor}\label{C:splitting field of cor}
	Let $K/F$ be a cyclic Galois extension and $A$ a central simple $K$-algebra. Let $L/F$ be a finite field extension linearly disjoint from $K/F$ and such that $A_{KL}$ is split. Then $\corr_{K/F} A\otimes_F L$ is split.
\end{cor}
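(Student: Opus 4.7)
The plan is to deduce this corollary directly by combining the previous two results, without any new structural work. The hypothesis on $L/F$ being linearly disjoint from $K/F$ is precisely the assumption needed to invoke \Cref{L:commutativity of cor-res}, which gives an isomorphism
$$\corr_{K/F}A\otimes_F L\simeq \corr_{KL/L}(A_{KL})$$
of central simple $L$-algebras. This reduces the problem to showing that the right-hand side is split.

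For the second step, I would apply \Cref{P:properties of cor}$(c)$ to the extension $KL/L$: since $A_{KL}$ is split by assumption, the corestriction $\corr_{KL/L}(A_{KL})$ is split as a central simple $L$-algebra. Transporting this back across the isomorphism from \Cref{L:commutativity of cor-res} gives that $\corr_{K/F}A\otimes_F L$ is split, which is the conclusion.

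One minor point worth checking before invoking these results is that $KL/L$ is itself a cyclic Galois extension, so that the corestriction $\corr_{KL/L}$ is defined in the sense of the preceding section; but this is already implicit in the statement and proof of \Cref{L:commutativity of cor-res}, where the isomorphism $\Gamma\to G$ of Galois groups induced by linear disjointness is used. Hence there is no real obstacle: the corollary is a formal consequence of the compatibility of corestriction with base change (\Cref{L:commutativity of cor-res}) together with the fact that corestriction preserves splitness (\Cref{P:properties of cor}$(c)$).
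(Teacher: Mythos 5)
Your proposal is correct and follows exactly the paper's own argument: apply \Cref{L:commutativity of cor-res} to identify $\corr_{K/F}A\otimes_F L$ with $\corr_{KL/L}(A_{KL})$, then invoke \Cref{P:properties of cor}$(c)$ to conclude that this $L$-algebra is split. Your added remark that $KL/L$ is cyclic Galois (so that the corestriction is defined) is a sensible sanity check already implicit in the cited lemma.
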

\begin{proof}
We have by \Cref{L:commutativity of cor-res} that $\corr_{K/F} A\otimes_F L\simeq \corr_{KL/L}(A_{KL})$, and by \Cref{P:properties of cor} this $L$-algebra is split.
\end{proof}

We will consider the corestriction of the generic quaternion algebra defined over a cyclic field extension of given degree.

Let $k$ be a field and $r$ be a positive integer. 
We set $$K=k(X_0,\ldots,X_{r-1},Y_0,\ldots,Y_{r-1})\,,$$
where $X_0,Y_0,\dots,X_{r-1},Y_{r-1}$ are variables over $k$. 
Let $\sigma$ be the $k$-linear automorphism of $K$ given by $\sigma(X_i)=X_{i+1}$ and $\sigma(Y_i)=Y_{i+1}$ for $0\leq i< r-1$, $\sigma(X_{r-1})=X_0$ and $\sigma(Y_{r-1})=Y_0$. Let $G$ be the group generated by $\sigma$. Then $G$ is a cyclic group of order $r$. Let $F=K^G$, the fixed field of $G$. It follows that $K/F$ is a cyclic extension with Galois group $G$ and with $[K:F]=|G|=r$. 

The following is a variation of \cite[Proposition]{RT83}.

\begin{prop}\label{P:sharpness-bound-2ext-quat-over deg8ext}
Let $Q=[X_0,Y_0)_K$. Then $\corr_{K/F} Q$ is a central $F$-division algebra of degree $2^r$ and exponent $2$. In particular, for any finite extension $L/F$ linearly disjoint from $K/F$ and such that $Q_{KL}$ is split, we have that $[L:F]\geq2^r$.
\end{prop}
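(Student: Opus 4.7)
The strategy is to extend scalars to $K$ and show that $(\corr_{K/F}Q)\otimes_F K$ is a division algebra of degree $2^r$; this will force $\corr_{K/F}Q$ itself to have index $2^r$, and the exponent and splitting-field claims will then follow formally. By \Cref{P:properties of cor}(a) and (b), $(\corr_{K/F}Q)\otimes_F K\simeq Q^{\otimes G}$ and $\deg_F\corr_{K/F}Q=2^r$. Unpacking the definition of the conjugate algebras, one checks that $\ca{\sigma^i}{[X_0,Y_0)_K}\simeq [\sigma^{-i}(X_0),\sigma^{-i}(Y_0))_K$, so that, up to reordering the factors,
\[Q^{\otimes G}\simeq \bigotimes_{i=0}^{r-1}[X_i,Y_i)_K.\]

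The main step is to show that this generic tensor product $A:=\bigotimes_{i=0}^{r-1}[X_i,Y_i)_K$ has index $2^r$ over $K=k(X_0,Y_0,\ldots,X_{r-1},Y_{r-1})$. My plan is to embed $K$ into the iterated Laurent series field $K^{*}=k(X_0,\ldots,X_{r-1})((Y_0))\cdots((Y_{r-1}))$ and to proceed by induction on $r$: with respect to the $Y_{r-1}$-adic valuation on $K^{*}$, the symbol $[X_{r-1},Y_{r-1})$ is ramified with nontrivial residue carrying the class of $X_{r-1}$, while the other symbols are unramified and, by induction, give a division algebra of index $2^{r-1}$ over the residue field. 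Standard valuation-theoretic glueing (the Nakayama--Scharlau type argument used in \cite{RT83}) then combines these ingredients into a division algebra of index $2^r$ over $K^{*}$.

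Since scalar extension only decreases the index, $\ind_K A\geq\ind_{K^{*}}(A\otimes_K K^{*})=2^r$, and hence $\ind_F(\corr_{K/F}Q)\geq 2^r$. Combined with \Cref{P:properties of cor}(b), this forces $\ind_F(\corr_{K/F}Q)=\deg_F(\corr_{K/F}Q)=2^r$, so $\corr_{K/F}Q$ is a central $F$-division algebra of degree $2^r$.

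The remaining claims are formal. Since $Q$ is a quaternion algebra, $Q\otimes_K Q$ is split, and then \Cref{P:properties of cor}(c) and (d) give that $(\corr_{K/F}Q)^{\otimes 2}\simeq \corr_{K/F}(Q\otimes_K Q)$ is split; hence $\exp\corr_{K/F}Q$ divides $2$, and as $\corr_{K/F}Q$ is a nontrivial division algebra, the exponent equals $2$. For the last assertion, let $L/F$ be a finite extension linearly disjoint from $K/F$ with $Q_{KL}$ split; by \Cref{C:splitting field of cor}, $L$ is a splitting field of $\corr_{K/F}Q$, and since the index of a central simple algebra divides the degree of any finite splitting field, we obtain $[L:F]\geq 2^r$. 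The only non-routine step is the residue computation in the second paragraph which certifies that the generic product of $r$ quaternion symbols has maximal index; everything else is a direct manipulation with the corestriction.
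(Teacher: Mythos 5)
Your proposal is correct and follows essentially the same route as the paper: identify $(\corr_{K/F}Q)\otimes_F K$ with $\bigotimes_{i=0}^{r-1}[X_i,Y_i)_K$ via \Cref{P:properties of cor}, show this generic tensor product is a division algebra, and then deduce the degree, exponent and splitting-field claims formally from \Cref{P:properties of cor} and \Cref{C:splitting field of cor}. The only difference is one of detail: the paper simply asserts that the generic product of quaternion symbols is a division algebra, whereas you sketch the standard iterated-Laurent-series residue argument for it; the discrepancy $\sigma^{i}$ versus $\sigma^{-i}$ in identifying the conjugate symbols is immaterial, since both yield the same set of factors up to reordering.
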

\begin{proof}
By \Cref{P:properties of cor} we have that 
$\corr_{K/F}Q$ is a central simple $F$-algebra with $\deg(\corr_{K/F}Q)=2^r$, because $r=[K:F]$. Note that 
$$\ca{\sigma^i\,\,}{[X_0,Y_0)_K}\simeq_K[\sigma^i(X_0),\sigma^i(Y_0))_K=[X_i,Y_i)_K\,\,\mbox{ for }\,\,0\leq i< r\,.$$ 
Hence $$Q^{\otimes G}=Q\otimes\ca{\sigma\,\,}{Q}\otimes\cdots\otimes\ca{\sigma^{r-1}\,}{Q}\simeq[X_0,Y_0)_K\otimes_K \cdots\otimes_K [X_{r-1},Y_{r-1})_K,$$
and this is a central division algebra over $K$. 
It follows from the item $(a)$ of \Cref{P:properties of cor} that $\corr_{K/F}Q$ is a central $F$-division algebra. 

Consider now a finite field extension $L/F$ which is linearly disjoint from $K/F$ and such that $Q_{KL}$ is split. Then $\corr_{K/F}Q\otimes_FL$ is split, by Corollary \ref{C:splitting field of cor}. 
Since $\corr_{K/F}Q$ is a division algebra, it follows that $[L:F]\geq \deg(\corr_{K/F}Q)=2^r$.
Moreover, by \Cref{P:properties of cor}, we obtain that $(\corr_{K/F}Q)\otimes_F(\corr_{K/F}Q)$ is split. Hence $\corr_{K/F}Q$ has exponent~$2$. 
\end{proof}

\Cref{P:sharpness-bound-2ext-quat-over deg8ext} shows that the bound in \Cref{C:quat-over-deg8ext} is optimal for extensions of degree $3$, $5$ and $7$, and that the bound in \Cref{C:multiquad-split-quaternion-char2} is optimal for extensions of odd degree.

\section{Splitting fields for algebras of exponent $2$} 

In this final section we use \Cref{C:quat-over-deg8ext} to obtain splitting fields for central simple algebras of degree $16$ which are $2$-extensions of bounded degree of the center (\Cref{T:2split16}).
In the characteristic two case we obtain a more general result for central simple algebras of exponent two and arbitrary degree (\Cref{T:2ext-split-exp2csa-char2}). 

We recall the following fact, which is essentially due to Albert and Rowen.

\begin{prop}\label{P:Rowen}
Let $A$ be a central simple $F$-algebra with $\ind A=2^r$ where $r\leq 3$.
There exists a $2$-extension $K/F$ with $[K:F]\leq2^{5}$ such that $A_K$ is split.
Moreover, if $r\leq 2$ or $\exp A=2$, then there exists a Galois extension $K/F$ with Galois group $(\zz/2\zz)^r$  such that $A_{K}$ is split.
\end{prop}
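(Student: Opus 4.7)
The plan is to invoke classical structure theorems for central simple algebras of small $2$-power degree, due to Albert and Rowen. Let $D$ be the central division $F$-algebra Brauer equivalent to $A$, so that $\deg D = \ind A = 2^r$ with $r \leq 3$; note that $A$ and $D$ share their splitting fields.

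For the first assertion I would combine Albert's theorem that every central simple algebra of degree $4$ is a cyclic crossed product (whose Galois group has order $4$ and is therefore a $2$-group) with Rowen's corresponding theorem that every central simple algebra of degree $8$ is a crossed product whose Galois group is a $2$-group of order $8$. For each $r \leq 3$ this yields a Galois maximal subfield $M$ of $D$ with $\Gal(M/F)$ a finite $2$-group of order $2^r$. Since finite $2$-groups are solvable, a composition series of $\Gal(M/F)$ exhibits $M/F$ as a tower of $r$ quadratic extensions, so $M/F$ is a $2$-extension in the sense of Section~2; taking $K := M$ splits $A$ with $[K:F] = 2^r \leq 2^3 \leq 2^5$.

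For the moreover assertion, the cases $r = 0$ and $r = 1$ are immediate, since the quaternion algebra Brauer equivalent to $A$ is split by any one of its quadratic splitting fields. For $r \geq 2$ under the hypothesis $\exp A = 2$, I would appeal to the stronger forms of the Albert--Rowen structure theorems: Albert's theorem that a central simple algebra of degree $4$ and exponent $2$ is a biquaternion algebra and therefore carries a biquadratic (i.e.\ $(\zz/2\zz)^2$-Galois) maximal subfield, and Rowen's celebrated theorem that a central simple algebra of degree $8$ and exponent $2$ contains a triquadratic maximal subfield, that is, a Galois extension of $F$ with group $(\zz/2\zz)^3$. In each case this maximal subfield $K$ of $D$ satisfies $\Gal(K/F) \cong (\zz/2\zz)^r$ and splits $A$.

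The principal obstacle is Rowen's triquadratic maximal subfield theorem for the case $r = 3$, $\exp A = 2$, which provides the deepest classical input to the proposition; by contrast, the construction in the first assertion is only as difficult as knowing a $2$-group crossed product structure exists, while the cases $r \leq 2$ are covered by Albert's classical work on degree $4$ algebras and on biquaternion algebras. The remaining ingredients are the observation that a Galois extension with a finite $2$-group Galois group is a $2$-extension in the sense of Section~2 (via a composition series), together with standard facts about quaternion algebras that can be cited from Albert's book or Draxl's.
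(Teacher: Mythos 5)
Your treatment of the ``moreover'' clause for $\exp A=2$ is essentially the paper's argument (Albert for degree $4$, Rowen's triquadratic maximal subfield theorem for degree $8$ and exponent $2$). But the first assertion, in the case $r=3$ with $\exp A\in\{4,8\}$, rests on a claim that is false: there is no theorem of Rowen (or anyone) that every central simple algebra of degree $8$ is a crossed product with a $2$-group. On the contrary, degree $8$ is exactly the smallest $2$-power degree in which noncrossed products exist --- this is Amitsur's theorem, quoted in the paper's introduction ($8$ is not of the form $d$ or $2d$ with $d$ squarefree). Rowen's crossed-product theorem in degree $8$ requires the hypothesis $\exp A=2$, and that hypothesis is doing real work. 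The fact that your argument would yield the bound $2^3$ for all $r\leq 3$, whereas the proposition only claims $2^5$, should have been a warning sign. The paper's route around this is: by Albert's index inequality, $\ind(A\otimes_F A)\leq 2^{r-1}=4$, so by the lower cases there is a $2$-extension $F'/F$ of degree at most $4$ splitting $A\otimes_F A$; over $F'$ one then has $\exp A_{F'}\leq 2$ and $\ind A_{F'}\mid 8$, so Rowen's theorem applies and gives a further extension of degree at most $8$, whence $[K:F]\leq 2^5$. You need to supply this reduction (or an equivalent one).

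Two smaller points. First, Albert's theorem in degree $4$ produces a $(\zz/2\zz)^2$-Galois maximal subfield, not a cyclic one; Albert himself constructed noncyclic division algebras of degree $4$, so ``cyclic crossed product'' is wrong as stated (though harmless for the first assertion, since either group is a $2$-group of order $4$). Second, the ``moreover'' clause also covers $r=2$ with $\exp A=4$, which your case split (``$r\geq 2$ under the hypothesis $\exp A=2$'') omits; here one must cite Albert's Theorem 11.9, which gives the $(\zz/2\zz)^2$-Galois splitting field for \emph{every} division algebra of degree $4$, with no exponent assumption --- the biquaternion description you invoke is only available when $\exp A=2$.
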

\begin{proof}
Let $D$ be the central $F$-division algebra which is Brauer equivalent to $A$.
Note that $A$ and $D$ have the same splitting fields over $F$.

If $r=1$, then $D$ is an $F$-quaternion algebra and the statement is obvious.
For $r=2$ the statement is due to Albert, see \cite[Theorem 11.9]{Alb39}.

Assume now that $r=3$.
Consider first the case where $\exp A=2$.
By Rowen's Theorem from \cite{Row78} and \cite{Row84},
$D$ contains a maximal subfield $K$ such that $K/F$ is a Galois extension with Galois group $(\zz/2\zz)^3$; see \cite[Corollary 7.7]{BGBT18a} for another proof of this result.
Then $A_K$ is split and $K/F$ is a $2$-extension with $[K:F]=8$.

In the more general situation we have that $\exp A=2^{s}$ with $1\leq s\leq 3$.
It follows by \cite[Lemma 5.7]{Alb39} that $\ind (A\otimes_F A)\leq 2^{r-1}=4$.
From the previous cases we obtain that there exists a $2$-extension $F'/F$ such that $[F':F]\leq 4$ and such that $(A\otimes_F A)_{F'}$ is split.
Then $\exp A_{F'}\leq 2$ and $\ind A_{F'}$ divides $8$.
It follows from the previous cases that $A_{K}$ is split for a Galois $2$-extension $K/F'$ with $[K:F']\leq 8$. 
Then $K/F$ is a $2$-extension and $[K:F]=[K:F']\cdot [F':F]\leq 8\cdot 4=2^5$.
\end{proof}


The following three statements are focussing on fields of characteristic $2$.
The following lemma  bounds the degree of a $2$-extension splitting a central simple algebra by the degree of a generalized $2$-extension with the same property. 

\begin{lem}\label{L:char2-reduc-gen2ext}
Let $A$ be a central simple $F$-algebra and let $K/F$ be a generalized $2$-extension such that $A_K$ is split.
Then there exists a $2$-extension $L/F$ with $[L:F]\leq [K:F]$ and such that $A_L$ is split.
\end{lem}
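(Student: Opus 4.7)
The plan is to proceed by induction on $r$, where $[K:F] = 2^r$, using a tower $F = K_0 \subset K_1 \subset \cdots \subset K_r = K$ of quadratic field extensions. The base case $r = 0$ is immediate. If $K_1/F$ is separable, then the induction hypothesis applied to $A_{K_1}$ over $K_1$ with the tower $K/K_1$ of length $r-1$ at once produces a $2$-extension $L/F$ of degree at most $2^r$ splitting $A$.

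The substantive case is $K_1 = F(\sqrt{a})$ inseparable. Applying the induction hypothesis to $A_{K_1}$ with the tower $K/K_1$ yields a $2$-extension $M/K_1$ of degree at most $2^{r-1}$ splitting $A$. The key step is then to descend $M$: I will construct a $2$-extension $L/F$ with $[L:F] \leq [M:K_1]$ and $M \subseteq L(\sqrt{a})$. This is done inductively along a tower $K_1 = M_0 \subset M_1 \subset \cdots \subset M_s = M$ of Artin--Schreier extensions. Suppose $L_i/F$ is a $2$-extension with $M_i \subseteq L_i(\sqrt{a})$, and $M_{i+1} = M_i(\beta)$ with $\beta^2 - \beta = c \in M_i$. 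Since $L_i/F$ is separable and $F(\sqrt{a})/F$ is inseparable, $\sqrt{a} \notin L_i$, so one has a unique expression $c = d + d'\sqrt{a}$ with $d, d' \in L_i$. Define $L_{i+1} = L_i(y)$ by the Artin--Schreier equation $y^2 - y = d + (d')^2 a \in L_i$. The characteristic-two identity $(y + d'\sqrt{a})^2 = y^2 + (d')^2 a$ then shows that $y + d'\sqrt{a}$ is a root of $X^2 - X - c$, hence coincides with $\beta$ up to addition of $1$; this puts $\beta$ in $L_{i+1}(\sqrt{a})$, so $M_{i+1} \subseteq L_{i+1}(\sqrt{a})$.

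Once $L = L_s$ is in hand, $A_L$ is split by the purely inseparable quadratic extension $L(\sqrt{a})/L$, which forces $\ind(A_L) \leq 2$, so $A_L$ is Brauer-equivalent to a quaternion algebra over $L$. In characteristic two every quaternion algebra is split by its Artin--Schreier maximal subfield, giving a $2$-extension $L'/L$ of degree at most $2$ with $A_{L'}$ split. Then $L'/F$ is a $2$-extension of degree at most $2 \cdot 2^{r-1} = 2^r = [K:F]$, completing the induction. The main obstacle is the descent construction: one has to verify that each $L_{i+1}$ is a $2$-extension of $F$, which is ensured precisely by the Artin--Schreier form of the defining equation for $y$ and by the fact that $\sqrt{a}$ remains inseparable over every $L_i$.
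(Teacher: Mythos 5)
Your proof is correct, but it takes a genuinely different route from the paper's. The paper first replaces $F$ by the separable closure of $F$ in $K$, reducing immediately to the case where $K/F$ is purely inseparable, and then inducts downward from the top of the tower: if $A_{K'}$ is not split for the intermediate field $K'$ with $[K:K']=2$, then $\ind A_{K'}=2$, so $A_{K'}$ admits a separable quadratic splitting field $L'$, and the purely inseparable nature of $K'/F$ forces $L'=K'F'$ for a separable quadratic extension $F'/F$, to which the induction hypothesis is applied. You instead induct from the bottom of the tower and, when the first step $F(\sqrt{a})/F$ is inseparable, perform an explicit Artin--Schreier descent showing that every $2$-extension $M/F(\sqrt{a})$ sits inside $L(\sqrt{a})$ for some $2$-extension $L/F$ with $[L:F]\leq[M:F(\sqrt{a})]$; the index-$2$ argument is then invoked only once, at the very end. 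Both proofs rest on the same two facts --- an algebra split by a quadratic extension has index at most $2$, and in characteristic $2$ an index-$2$ algebra has a separable quadratic splitting field --- but your descent step is more explicit and constructive (it exhibits the defining Artin--Schreier equations of $L$) and isolates a reusable statement of independent interest, while the paper's version is shorter because it descends only a single separable quadratic extension at a time and lets the structural identity $L'=K'F'$ do the work. Your computation $(y+d'\sqrt{a})^2-(y+d'\sqrt{a})=d+d'\sqrt{a}$ is correct in characteristic $2$; the only point worth spelling out is that $a$ remains a non-square in each $L_i$ (needed both for the uniqueness of the expression $c=d+d'\sqrt{a}$ and for $[L(\sqrt{a}):L]=2$ at the end), which follows since $L_i/F$ separable implies $L_i^2\cap F=F^2$.
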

\begin{proof}
To prove the statement we may  replace $F$ by the separable closure of $F$ in $K$.
Hence we may assume that $K/F$ is purely inseparable.
We prove the claim by induction on $[K:F]$.
If $[K:F]=1$, then $K=F$ and we may take $L=F$.
Assume now that $[K:F]\geq 2$.
Then there exists an intermediate field $K'$ of $K/F$ such that $[K:K']=2$. If $A_{K'}$ is split then the induction hypothesis applied to $K'/F$ yields a $2$-extension $L/F$ with $[L:F]\leq[K':F]=\frac{1}{2}[K:F]$ such that $A_L$ is split and we are done.

Assume that $A_{K'}$ is not split. Hence $\ind A_{K'}=2$. Then there exists a separable quadratic extension $L'/K'$ such that $A_{L'}$ is split. Since $K'/F$ is a purely inseparable extension we have that $L'=K'F'$ for some separable quadratic extension $F'/F$. As $[L':F']=[K':F]=\frac{1}{2}[K:F]$, it follows by applying the induction hypothesis to the central simple $F'$-algebra $A_{F'}$ that there exists some $2$-extension $L/F'$ with $[L:F']\leq[L':F']$ such that $A_L$ is split. Now $L/F$ is a $2$-extension such that $A_L$ is split and $[L:F]=[L:F']\cdot[F':F]\leq2\cdot[L':F']=[K:F]$.
\end{proof}


\begin{lem}\label{L:insepindred}
Assume that $\car (F)=2$. Let $A$ be a central simple $F$-algebra with $\exp A=2$. Let $n\in\mathbb{N}$ be such that $\ind A=2^n$. Then there exists a purely inseparable extension $F'/F$ with $F'^2\subseteq F$, $[F':F]\leq2^{2^{n-1}} $ and $\ind A_{F'}\leq 2^{n-1}$.
\end{lem}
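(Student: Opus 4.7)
My plan is to isolate a residual quaternion algebra over a separable subfield of degree $2^{n-1}$ inside the underlying division algebra, and then to split it via \Cref{C:multiquad-split-quaternion-char2}. Let $D$ denote the central division $F$-algebra Brauer equivalent to $A$; then $\deg D = \ind A = 2^n$. First I would exhibit a separable subfield $K\subseteq D$ with $[K:F] = 2^{n-1}$. By the centralizer theorem, this gives $\ind A_K = (\deg D)/[K:F] = 2$, so that $A_K$ is Brauer equivalent to some $K$-quaternion algebra $Q$.

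Next I would apply \Cref{C:multiquad-split-quaternion-char2} to $Q$, using the separability of $K/F$: the resulting extension $F'/F$ can be chosen multiquadratic purely inseparable, satisfying $F'^2\subseteq F$ and $[F':F]\leq 2^{[K:F]} = 2^{2^{n-1}}$, with $Q_{KF'}$ split. Then $A_{KF'}$ is split, so $KF'$ splits $A_{F'}$. Linear disjointness of $K/F$ and $F'/F$ (separable versus purely inseparable) gives $[KF':F'] = [K:F] = 2^{n-1}$, whence $\ind A_{F'}$ divides $2^{n-1}$, as required.

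The hard part will be exhibiting the separable subfield $K\subseteq D$ of degree exactly $2^{n-1}$. In characteristic $2$, every central division algebra has a separable maximal subfield of degree $\deg D = 2^n$, but the existence of a separable intermediate field of degree $2^{n-1}$ is a nontrivial structural claim --- it amounts to a group-theoretic condition on the Galois closure. For small $n$ (say $n\leq 3$) this follows from a characteristic-$2$ adaptation of \Cref{P:Rowen}, which provides a Galois maximal subfield with Galois group $(\mathbb{Z}/2\mathbb{Z})^n$; for general $n$, one would need a strengthening, ideally a Galois maximal subfield of $D$ whose Galois group is a $2$-group, since any such subfield then automatically admits intermediate fields of every power-of-$2$ degree. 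An alternative route would be to invoke the characteristic-$2$ analog of Merkurjev's theorem to write $A \sim \bigotimes_{i=1}^m [a_i,b_i)_F$ and take $F' = F(\sqrt{b_1},\ldots,\sqrt{b_m})$, trading the structural claim for a symbol-length bound $m \leq 2^{n-1}$.
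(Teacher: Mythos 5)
Your reduction is fine once the key input is granted: if there were a \emph{separable} field extension $K/F$ with $[K:F]=2^{n-1}$ and $\ind A_K\leq 2$, then applying \Cref{C:multiquad-split-quaternion-char2} to the residual quaternion algebra and using that a separable and a purely inseparable extension of $F$ are linearly disjoint (so $[KF':F']=2^{n-1}$) does give $F'^2\subseteq F$, $[F':F]\leq 2^{2^{n-1}}$ and $\ind A_{F'}\leq 2^{n-1}$. But the existence of that separable $K$ is exactly the crux in characteristic $2$, and you do not establish it. The source the paper relies on, \cite[Lemma 4]{Bec16}, produces a field extension of degree $\tfrac12\ind A$ with residual index $2$, but gives no control over separability; and both of your proposed repairs are out of reach with the tools at hand: \Cref{P:Rowen} (Rowen's theorem) only covers index up to $8$, a Galois maximal subfield with $2$-group Galois group for arbitrary $n$ is not available, and the symbol-length bound $m\leq 2^{n-1}$ you would need for the alternative route is far stronger than what \cite{Flo13} provides (which corresponds to symbol length $2^n-1$). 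So the proof has a genuine gap at its central step.

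The paper circumvents the issue by never asking for a separable extension realizing index $2$. It considers towers $M/L/F$ with $[M:F]=2^n$, $M/L$ purely inseparable and $A_M$ split; such towers exist by \cite[Lemma 4]{Bec16} (split the residual quaternion algebra by an inseparable quadratic extension), and choosing one with $[L:F]$ minimal forces $L/F$ to be separable, with $[L:F]=2^m\leq 2^{n-1}$. One then picks $c\in(\mg{L}\cap\sq{M})\setminus\sq{L}$, observes that $\ind A_{L(\sqrt{c})}\leq 2^{n-m-1}$ since $L(\sqrt{c})\subseteq M$, and uses \Cref{L:Albert} (with $r=0$, as $c$ is separable over $F$) to produce a purely inseparable $F'/F$ with $F'^2\subseteq F$ and $[F':F]\leq 2^{2^m}\leq 2^{2^{n-1}}$ such that $L(\sqrt{c})\subseteq F'L$; combining $[F'L:F']=2^m$ with $\ind A_{F'L}\leq 2^{n-m-1}$ yields $\ind A_{F'}\leq 2^{n-1}$. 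This achieves only the single step of index reduction that the lemma asserts, but it requires no structural input about separable subfields of the division algebra. If you want to salvage your approach, you would need to prove the existence of the separable degree-$2^{n-1}$ subfield, which is a substantial claim in its own right.
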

\begin{proof}
We consider towers of field extensions $M/L/F$
with $[M:F]=2^n$ such that $M/L$ is purely inseparable and $A_M$ is split.
We choose such a tower with $[L:F]$ as small as possible.
Then $L/F$ is separable.
By \cite[Lemma 4]{Bec16}  there exists a field extension
$L'/F$ with $[L':F]=\frac{1}2\ind A$ and such that $\ind A_{L'}=2$.
Hence there exists an inseparable quadratic extension $M'/L'$ such that $A_{M'}$ is split. Then $[M':F]=2^n$ and the choice of $M/L/F$ yields that $[L:F]\leq [L':F]=2^{n-1}$.
We conclude that $[L:F]=2^m$ for some $m\in\nat$ with $m<n$.

Hence $[M:L]\geq 2$, and since $M/L$ is purely inseparable, there exists an element $c\in(\mg{L}\cap\sq{M})\setminus\sq{L}$.
Since $\ind A=[M:F]=2^n$, $L(\sqrt{c})\subseteq M$ and $[L(\sqrt{c}):F]=2^{m+1}$, it follows that $\ind A_{L(\sqrt{c})}\leq2^{n-m-1}$.
By \Cref{L:Albert},
there exists a purely inseparable field extension $F'/F$ with $[F':F]\leq2^{2^{n-1}}$ and $F'^2\subseteq F$ such that 
$c\in \sq{F'L}$. Hence $L(\sqrt{c})\subseteq F'L$.
It follows that $\ind A_{F'L}\leq 2^{n-m-1}$.
Since $[LF':F']=[L:F]=2^m$, we conclude that $\ind A_{F'}\leq 2^{n-1}$.
\end{proof}

For a central simple algebra $A$ of exponent $2$ over a field $F$ of characteristic~$2$, it is shown in \cite[Theorem 1.1]{Flo13} that there exists a purely inseparable field extension $K/F$ with $[K:F]\leq2^{\ind A-1}$ such that $A_K$ is split and $K^2\subseteq F$. 
By \cite[Theorem 7.28]{Alb39} this gives rise to a splitting field $L/F$ of $A$ which is a $(\mathbb{Z}/2\mathbb{Z})^n$-Galois extension for $n=\ind A-1$, so in particular a $2$-extension. 
We now obtain a slightly better bound for the degree of a $2$-extension splitting~$A$.

\begin{prop}\label{T:2ext-split-exp2csa-char2}
Assume that $\car (F)=2$. Let $A$ be a central simple $F$-algebra with $\exp A=2$ and $\ind A\geq 8$.  
Then there exist a purely inseparable extension $F'/F$ with  
$[F':F]\leq 2^{\ind A-8}$ and a $(\zz/2\zz)^3$-Galois extension $K/F$ such that $A_{F'K}$ is split.
Moreover, there exists a $2$-extension $L/F$ with $[L:F]\leq2^{\ind A-5}$ such that $A_{L}$ is split.
\end{prop}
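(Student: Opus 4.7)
The plan is to iterate \Cref{L:insepindred} to reduce the index of $A$ to at most $8$, apply \Cref{P:Rowen} to obtain a $(\zz/2\zz)^3$-Galois splitting field of the resulting algebra, descend that field from the resulting purely inseparable extension back to $F$, and finally invoke \Cref{L:char2-reduc-gen2ext} for the moreover clause.

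Write $\ind A = 2^n$ with $n\geq 3$ and set $F_0:=F$. For $k=1,\dots,n-3$, I would iteratively apply \Cref{L:insepindred} to $A_{F_{k-1}}$ (whose exponent divides $2$ and whose index is at most $2^{n-k+1}$) to produce a purely inseparable extension $F_k/F_{k-1}$ with $F_k^2\subseteq F_{k-1}$, $[F_k:F_{k-1}]\leq 2^{2^{n-k}}$ and $\ind A_{F_k}\leq 2^{n-k}$. Setting $F':=F_{n-3}$, the telescoping product gives
\[
[F':F]\;\leq\;\prod_{k=1}^{n-3} 2^{2^{n-k}}\;=\;2^{\,2^{n-1}+2^{n-2}+\cdots+2^{3}}\;=\;2^{\ind A-8},
\]
while $\ind A_{F'}\leq 8$; moreover $F'/F$ is a generalized $2$-extension, since each step $F_k/F_{k-1}$ is a tower of purely inseparable quadratic extensions by the condition $F_k^2\subseteq F_{k-1}$.

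Next I would apply \Cref{P:Rowen} to $A_{F'}$: since $\exp A_{F'}$ divides $2$ and $\ind A_{F'}=2^{r'}$ with $r'\leq 3$, this produces a Galois extension $K'/F'$ with group $(\zz/2\zz)^{r'}$ splitting $A_{F'}$. When $r'<3$, I would enlarge $K'$ to a $(\zz/2\zz)^3$-Galois extension $K''/F'$ by adjoining $3-r'$ further linearly disjoint separable quadratic extensions of $F'$. Since $F'/F$ is purely inseparable and $K''/F'$ is separable, the separable closure of $F$ in $K''$ is a $(\zz/2\zz)^3$-Galois extension $K/F$ with $K\cap F'=F$ and $F'K=K''$ (using that separable and purely inseparable extensions are linearly disjoint). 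Then $A_{F'K}$ is split, giving the first assertion. The moreover clause follows immediately: $F'K/F$ is a generalized $2$-extension of degree at most $2^{\ind A-8}\cdot 2^3=2^{\ind A-5}$ splitting $A$, and \Cref{L:char2-reduc-gen2ext} converts it to a (separable) $2$-extension $L/F$ with $[L:F]\leq 2^{\ind A-5}$ such that $A_L$ is split.

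The main technical obstacle is the padding step when $r'<3$: one needs $3-r'$ further linearly independent separable quadratic extensions of $F'$. Because $F'/F$ is purely inseparable, the natural map yields an isomorphism $F/\wp(F)\cong F'/\wp(F')$ of $\mathbb{F}_2$-vector spaces, so the question reduces to showing that $\dim_{\mathbb{F}_2}(F/\wp(F))\geq 3$. One expects this to follow from the hypotheses $\exp A=2$ and $\ind A\geq 8$ (for instance by producing, at some intermediate stage of the iteration where the index is exactly $8$, an invocation of Rowen's theorem that itself furnishes a $(\zz/2\zz)^3$-Galois extension and thus three independent Artin--Schreier classes in $F/\wp(F)$). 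Once this padding availability is secured, the remaining steps are routine.
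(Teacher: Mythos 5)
Your proposal follows the paper's proof essentially step for step: iterate \Cref{L:insepindred} to bring the index down to at most $8$ over a purely inseparable $F'$ with $[F':F]\leq 2^{2^{n-1}+\cdots+2^3}=2^{\ind A-8}$, apply \Cref{P:Rowen} over $F'$, descend the resulting Galois extension through the purely inseparable extension to $F$, and finish with \Cref{L:char2-reduc-gen2ext}. The degree bookkeeping and the descent of $K''$ through $F'$ are correct, and the ``moreover'' clause is complete as written, since there a $(\zz/2\zz)^{r'}$-extension with $r'\leq 3$ already gives $[F'K':F]\leq 2^{\ind A-5}$.

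The one genuine issue is the one you flag yourself and leave open: when $\ind A_{F'}<8$ you must pad $K'$ to a full $(\zz/2\zz)^3$-extension, and you only say you ``expect'' $\dim_{\mathbb{F}_2}(F/\wp(F))\geq 3$. Your proposed mechanism (invoke Rowen at an intermediate stage where the index is exactly $8$) is not automatic, because a single application of \Cref{L:insepindred} may drop the index by more than a factor of $2$, so your iteration need never see index exactly $8$. There are two clean fixes. (i) The paper's route: it simply arranges $\ind A_{F_k}=2^{n-k}$ at each stage; this can be justified because each $F_k/F_{k-1}$ is a tower of quadratic extensions and the index drops by a factor of at most $2$ across each quadratic step, so a discrete intermediate-value argument lets you replace $F'$ by an intermediate field over which the index is exactly $8$ (only improving the degree bound), after which \Cref{P:Rowen} directly yields a $(\zz/2\zz)^3$-extension and no padding is needed. (ii) Complete your padding: by additivity of $[a,b)$ in the Artin--Schreier slot, if $\dim_{\mathbb{F}_2}(F/\wp(F))=d$ then every class in $\Br(F)$ of exponent dividing $2$ is a sum of at most $d$ symbols, hence has index at most $2^d$; so $\ind A\geq 8$ forces $d\geq 3$, and $F/\wp(F)$ injects into $F'/\wp(F')$ because a root of $X^2-X-a$ lying in the purely inseparable extension $F'$ is both separable and purely inseparable over $F$, hence lies in $F$. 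Either repair makes your argument complete; as it stands, the first assertion of the proposition is not fully proved.
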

\begin{proof}
We have $\ind A=2^n$ for some integer $n\geq 3$.
Using \Cref{L:insepindred} repeatedly, we obtain for $1\leq k\leq n-3$  
 a purely inseparable field extension $F_k/F$ with $[F_k:F]\leq2^{(\sum_{i=1}^k {2^{n-i}})}$, $(F_k)^{2^k}\subseteq F$ and $\ind A_{F_k}= 2^{n-k}$.
We set $F'=F_{n-3}$ and obtain that $[F':F]\leq 2^{\ind A-8}$ and $\ind A_{F'}= 8$.

Note that  $\exp A_{F'}=\exp A=2$.
It follows by \Cref{P:Rowen} that there exists a $(\zz/2\zz)^3$-Galois extension $K'/F'$ such that $A_{K'}$ is split.
Let $K$ be the separable closure of $F$ in $K'$. Then $F'K=K'$ and $K/F$ is a  $(\zz/2\zz)^3$-Galois extension.

Finally, since  $F'K/F$ is a generalized $2$-extension  such that $A_{F'K}$ is split, it follows by \Cref{L:char2-reduc-gen2ext} that there exists a $2$-extension $L/F$ such that $A_{L}$ is split and $[L:F]\leq [F'K:F]=[K:F]\cdot [F':F]\leq 2^{\ind A-5}$.
\end{proof}

For our final result we return to fields of arbitrary characteristic.

\begin{thm}\label{T:2split16}
Let $A$ be a central simple $F$-algebra with $\ind A=16$.
Then $A$ is split by a $2$-extension $K/F$ with $[K:F]\leq2^{16}$.
Moreover, if $\exp A= 2$, then $A$ is split by a $2$-extension $K/F$ with $[K:F]\leq2^{11}$. 
\end{thm}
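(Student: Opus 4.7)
The plan is to prove the moreover clause first and then bootstrap it to the general statement via Albert's tensor-square index-halving. The moreover clause is handled differently in the two characteristics.

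Assume $\exp A = 2$. In characteristic $2$, I would apply \Cref{T:2ext-split-exp2csa-char2} with $\ind A = 16$: it yields a $2$-extension of $F$ of degree at most $2^{\ind A - 5} = 2^{11}$ splitting $A$. In characteristic $\neq 2$, the key step is to descend the Brauer class to a quaternion class over a small extension. I would invoke \cite[Lemma 4]{Bec16} (used in the proof of \Cref{L:insepindred}) to produce a field extension $L/F$ with $[L:F] = 8 = \frac{1}{2}\ind A$ and $\ind A_L = 2$, so that $A_L$ is Brauer equivalent to some quaternion algebra $Q$ over $L$. Since $[L:F] \leq 8$, \Cref{C:quat-over-deg8ext} supplies a $2$-extension $F'/F$ with $[F':F] \leq 2^8$ such that $Q_{LF'}$, hence $A_{LF'}$, is split. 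Then $\ind A_{F'}$ is a power of $2$ dividing $16$ and bounded by $[LF':F'] \leq 8$, while $\exp A_{F'} \leq 2$. The moreover part of \Cref{P:Rowen} now applies to $A_{F'}$ and produces a Galois $2$-extension $K/F'$ of degree at most $2^3$ splitting $A_{F'}$. Composing, $K/F$ is a $2$-extension with $[K:F] \leq 2^3 \cdot 2^8 = 2^{11}$.

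For the main statement with $\exp A$ arbitrary, write $\exp A = 2^s$; the moreover case covers $s \leq 1$, so assume $s \geq 2$. Then $\exp(A \otimes_F A) = 2^{s-1}$ and by Albert's lemma \cite[Lemma 5.7]{Alb39} we have $\ind(A \otimes_F A) \leq \ind A / 2 = 8$. Applying \Cref{P:Rowen} to $A \otimes_F A$ yields a $2$-extension $F_0/F$ with $[F_0:F] \leq 2^5$ over which $A \otimes_F A$ splits, so $\exp A_{F_0} \leq 2$ and $\ind A_{F_0}$ divides $16$. Applying the moreover clause to $A_{F_0}$ when $\ind A_{F_0} = 16$ (or \Cref{P:Rowen} directly when $\ind A_{F_0} \leq 8$) produces a $2$-extension $K/F_0$ with $[K:F_0] \leq 2^{11}$ splitting $A_{F_0}$. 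Then $K/F$ is a $2$-extension with $[K:F] \leq 2^5 \cdot 2^{11} = 2^{16}$.

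The main obstacle is the characteristic-$\neq 2$ subcase of the moreover clause, since \Cref{C:quat-over-deg8ext} only applies to quaternion algebras over an extension of degree at most $8$. This forces us to first find an intermediate field over which $A$ has index exactly $2$, and \cite[Lemma 4]{Bec16} is the tool doing precisely this. The rest is careful bookkeeping of the composite degrees: once $A$ becomes a quaternion algebra over a degree-$8$ extension, \Cref{C:quat-over-deg8ext} combined with \Cref{P:Rowen} gives the $2^{11}$ bound, and the general case follows by reducing the exponent via $A \otimes_F A$ at an extra cost of $2^5$.
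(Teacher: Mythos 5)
Your proposal is correct and follows essentially the same route as the paper's proof: \cite[Lemma 4]{Bec16} to reduce to a quaternion algebra over a degree-$8$ extension, \Cref{C:quat-over-deg8ext} plus \Cref{P:Rowen} for the exponent-$2$ case in characteristic $\neq 2$, \Cref{T:2ext-split-exp2csa-char2} in characteristic $2$, and Albert's index-halving of $A\otimes_F A$ together with \Cref{P:Rowen} to reduce the general case to exponent $2$. The degree bookkeeping matches the paper's exactly.
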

\begin{proof}
We first consider the case where $\exp A= 2$. 
If $\car(F)=2$, then it follows by \Cref{T:2ext-split-exp2csa-char2} that $A_K$ is split for some $2$-extension $K/F$ with $[K:F]\leq 2^{11}$.
Suppose that $\car(F)\neq 2$.
It follows by \cite[Lemma 4]{Bec16} that there exists 
a field extension $L/F$  such that $[L:F]=8$ and $\ind A_L=2$. 
We obtain by \Cref{C:quat-over-deg8ext} that there exists a $2$-extension $F'/F$ with $[F':F]\leq2^8$ such that $A_{LF'}$ is split. 
Since $[L:F]=8$, it follows that $\ind A_{F'}$ divides $8$.
Hence, by \Cref{P:Rowen}, there exists a $2$-extension $K/F'$ with $[K:F']\leq 8$ and such that $A_K$ is split. 
Then $[K:F]=[K:F']\cdot [F':F]\leq2^{11}$.
This settles the case where $\exp A=2$.


Consider now the general case. By \cite[Lemma 5.7]{Alb39}, we have $\ind(A\otimes_F A)\leq 8$. 
By \Cref{P:Rowen}, there exists a $2$-extension $F'/F$ with $[F':F]\leq2^5$ and such that $(A\otimes_F A)_{F'}$ is split. 
Then $\exp A_{F'}\leq 2$, and it follows from the special case treated above that $A_K$ is split for some $2$-extension $K/F'$ with $[K:F']\leq 2^{11}$.
Then $K/F$ is a $2$-extension with $[K:F]\leq 2^{16}$.
\end{proof}

\vspace{-5mm}

\end{document}